\def\ps@pprintTitle{%
  \let\@oddhead\@empty
  \let\@evenhead\@empty
  \def\@oddfoot{\reset@font\hfil\thepage\hfil}
  \let\@evenfoot\@oddfoot
}
\newtheorem{theorem}{Theorem}[section]
\newtheorem{corollary}[theorem]{Corollary}
\newtheorem{lemma}[theorem]{Lemma}
\newtheorem{observation}[theorem]{Observation}
\theoremstyle{definition}
\newtheorem{definition}[theorem]{Definition}
\newtheorem{remark}[theorem]{Remark}
\def\N{\mathbb{N}}
\def\cS{\mathcal{S}}
\def\d{\mathbf{d}}
\def\G{\mathbb{G}}
\def\PP{\Pi}
\def\GG{\mathbb{G}}
\def\bd{\mathbf{d}}
\def\J{\mathcal{J}}
\def\cB{\mathcal{B}}
\newcommand\old[1]{}
\def\deg{d}
\begin{document}
\begin{frontmatter}
\title{Graph realizations constrained by skeleton graphs\tnoteref{ack}}
\author[renyi]{P\'eter L. Erd\H os\fnref{miklos}}
\author[ucd]{Stephen G. Hartke\fnref{hartke}}
\author[tud]{Leo van Iersel\fnref{lvi}}
\author[renyi]{Istv\'an Mikl\'os\fnref{miklos}}
\address[renyi]{Alfr\'ed R{\'e}nyi Institute, Re\'altanoda u 13-15 Budapest, 1053 Hungary\\
{\tt email:} $<$erdos.peter,miklos.istvan$>$@renyi.mta.hu}
\address[ucd]{Department of Mathematical and Statistical Sciences, University of Colorado Denver, Colorado, Denver, USA\\
{\tt email:} stephen.hartke@ucdenver.edu}
\address[tud]{Delft Institute of Applied Mathematics, Delft University of Technology, PO-box 5, 2600AA, Delft, Netherlands\\
{\tt email:} l.j.j.v.iersel@gmail.com}
\tnotetext[ack]{This research started when the 2nd and 3rd authors visited the MTA A. R\'enyi Institute of Mathematics, Budapest in the Fall of 2013. }
\fntext[hartke]{Partly supported by a U.S. Fulbright Scholar Fellowship and by a grant from the Simons Foundation (\#316262 to Stephen Hartke).}
\fntext[lvi]{Partly funded by the Netherlands Organisation for Scientific Research (NWO), including Veni grant 639.071.106 and Vidi grant 639.072.602 and by the 4TU Applied Mathematics Institute.}
\fntext[miklos]{PLE and IM were supported partly by the Hungarian National Research, Development and Innovation Office NKFIH, under the grants K 116769 and SNN 116095.}
\begin{abstract}
In 2008 Amanatidis, Green and Mihail introduced the {\em Joint Degree Matrix} (JDM) model to capture the fundamental difference in {\em assortativity} of networks in nature studied by the physical and life sciences and social networks studied in the social sciences.  In 2014 Czabarka proposed a direct generalization of the JDM model, the {\em Partition Adjacency Matrix} (PAM) model. In the PAM model the vertices have specified degrees, and the vertex set itself is partitioned into classes. For each pair of vertex classes the number of edges between the classes in a graph realization is prescribed.
In this paper we apply the new {\em skeleton graph} model to describe the same information as the PAM model. Our model is more convenient for handling problems with low number of partition classes or with special topological restrictions among the classes. We investigate two particular cases in detail: (i) when there are only two vertex classes and
(ii) when the skeleton graph contains at most one cycle.
\end{abstract}
\begin{keyword}
\MSC[2010]{05C07}\\
degree sequences; Joint Degree Matrix; Partition Adjacency Matrix; skeleton graph; forbidden edges; Tutte gadget; Edmonds's blossom algorithm
\end{keyword}
\end{frontmatter}

\section{Introduction}\label{sec:intro}
In the last fifteen years, the exponential development of network theory has raised the {\em practical} problem of realizing and sampling large graphs with given degree sequences. Finding a realization of a given degree sequence (among simple graphs or graphs with a given maximum number of parallel edges or/and loops) has long been shown to be an easy problem. Havel~\cite{havel} first solved the problem in 1957, and his algorithm was reinvented by Hakimi~\cite{hakimi} in 1962. An even older but less efficient way to find realizations can be derived from Tutte's $f$-factor results \cite{T52,T54}. Another method was due to Paul Erd\H{o}s and Gallai~\cite{EG60} in 1960, but the resulting algorithm was derived from Havel's approach.  All these methods lack the ability of generating all (or even a large number of different) realizations. The problem of determining if there exists a graph with given degree sequence and satisfying other specified conditions will be called in this paper the {\bf realization} problem.

In many situations, it is not feasible to generate all realizations, as the number of possible realizations can be exponential or larger in the length of the degree sequence. In this case, practical applications may require reasonable sampling methods of the ``typical'' realizations. A common approach is to use Markov Chain Monte Carlo methods, which require some simple operations that transform one realization into another, slightly different realization.
Additionally, it must be possible to transform any given realization into any other using these operations.
This particular subproblem of the sampling procedure will be called in this paper the {\bf connectivity} problem (also known as the irreducibility problem in the context of Markov Chain Monte Carlo processes for sampling random realizations).

In modern graph theory the first such manipulation was Havel's {\em swap} operation from \cite{havel}. (The terminology and notation used in this paper will be introduced in detail in Section~\ref{sec:pre}.) It is interesting to remark that his method was applied already by Petersen~\cite{pet} in 1891, who essentially showed that any realization of a given degree sequence can be transformed into any other realization of the same degree sequence by a series of such swap operations. In 1951 Senior \cite{S51} also used this approach to construct possible hydrocarbon molecules with given atomic composition.
For bipartite graphs it was done by Ryser~\cite{ryser} in 1963, and all these results have been invented again and again.

Recently a large number of real-world social and biological networks were studied in detail. One important distinction between these two types of networks lies in their overall structure: social networks typically have a few very high degree vertices and many low degree vertices with high {\em assortativity} (where a vertex is likely to be adjacent to vertices of similar degree), while biological networks are generally {\em disassortative} (in which low degree vertices tend to attach to those of high degree). It is easy to see that the {\em degree sequence} alone cannot capture these differences. There are several approaches to address this problem. See the paper of Stanton and Pinar~\cite{stanton} for a detailed description of the current state-of-the-art.

One way to address this problem is the {\bf joint degree matrix} model (or JDM for short). This model is more restrictive than the degree distribution, but it provides a way to enhance results based on degree distribution. It was introduced by  Amanatidis, Green and Mihail~\cite{agm08}  and Stanton and  Pinar~\cite{stanton}. In essence, the JDM specifies the exact {\em number} of edges between vertices of degrees $i$ and $j$. More precisely, a {\em joint degree matrix} $\J(G)=[\J_{ij}]$ of the graph $G=(V,E)$ is a $\Delta \times\Delta$ matrix ($\Delta$ is the maximum degree of $G$) where $\J_{ij}=|\{xy\in E(G): \,d(x)=i \text{ and } d(y)=j\}|$.
It is clear that the degree sequence of the graph is determined by its JDM:
\begin{equation}\label{eq:JDM}
\qquad (\text{the number of vertices of degree $i$}) = \frac{1}{i} \left (\J_{ii} + \sum_{\ell=1}^{\Delta} \J_{i \ell} \right ).
\end{equation}
The novelty of this definition is that values $\J_{i j}$ are exact numbers, and not expectations, like in earlier approaches, see for example \cite{dk06}.

The existence problem for the JDM model is not hard: already Patrinos and Hakimi~\cite{patrinos} presented in 1976 an Erd\H{o}s-Gallai-type theorem for joint degree matrices, essentially characterizing precisely those matrices which are the joint degree matrix for some graph, though using different terminology. Another proof for this result was given in \cite{agm08}, see also \cite{agm15}. Stanton and Pinar~\cite{stanton} gave a separate, constructive proof for this theorem, which builds a particular graph that has a given matrix as its JDM.  Czabarka, Dutle, Erd\H{o}s and Mikl\'os~\cite{JDM} presented a simpler proof using a construction algorithm that can create \emph{every} graph with a given JDM. See also \cite{GTM15}.

The connectivity problem for the JDM model proved to be more complicated. Stanton and Pinar~\cite{stanton} solved it for the space of all multigraph realizations. For simple graphs it was resolved affirmatively by Czabarka, Dutle, Erd\H{o}s and Mikl\'os~\cite{JDM}.

The JDM model suggests a more general restricted degree sequence problem: the {\bf partition adjacency matrix} model (or PAM for short). In this generality, it was introduced by \'E. Czabarka~\cite{PAM}. Let $\PP=(P_1, \cdots P_k)$ be a partition of the vertex set $V$ of the graph $G$.  Let $\bd$ be the degree sequence of $G$, and let $M$ be the following $k\times k$ matrix: if $i\ne j$ then the entry $M_{i,j}$ is the number of edges in the bipartite subgraph $G[P_i, P_j]$, while $M_{i,i}$ is the number of edges within the induced subgraph $G[P_i]$. The matrix $M$ is called the {\em partition adjacency matrix} of the graph $G$ for the partition $\PP$. Clearly the {\em PAM-problem} is: we are given a positive integer sequence on the partitioned $V$ and a matrix $M$ and we want to decide whether there is a graph $G$ with the given degree sequence and with the given PAM.

The joint degree  matrix determines the degree sequence itself by equation (\ref{eq:JDM}). Therefore the JDM is clearly an instance of the PAM-problem. The PAM problem in full generality is probably quite complicated: when we have, say, $\sqrt{|V|}$ partition classes, then the problem is conjectured to be NP-complete.

\bigskip\noindent
In this paper we will consider an auxiliary structure to describe the PAM problem. This is a more convenient description when there are only a small, say, linear number of items in $M$ which are not zero. This object also provides structural properties of the edges among the vertex partitions. This description is based on the notion of a {\em skeleton graph}, and it is described in detail in Section~\ref{sec:pre}.

We study two particular skeleton graphs in detail: the first one consists of two partition classes with edges inside the classes allowed, while in the second one each of its components contains at most one cycle while the classes have no edges inside. In both cases we show how to quickly construct graphical realizations in all feasible cases. We also consider whether the space of all realizations are connected. The answers in both cases are almost affirmative: the space is connected if we use swaps as well as an additional operation called \emph{double swaps}.

\section{Definitions and tools}\label{sec:pre}

Let $G=(V,E)$ be a \emph{simple} graph with vertex set $V=\left\{v_1,\dots,v_n\right\}$ and edge set $E$ (no multiple edges nor loops). Denote $\bd(G)=(d(v_1),\dots,d(v_n))$ its \emph{degree sequence}. This sequence is  not ordered in any way. $G$ is called a \emph{realization} of the previous sequence. A sequence $\bd=(d_1,\dots,d_n)$  of nonnegative integers is {\bf graphical} if it has at least one realization. The set of all realizations of a graphical degree sequence~$\d$ is denoted by $\GG(\bd)$.

We consider realizations of a degree sequence $\bd=(d_1,\dots,d_n)$ that are restricted by a ``skeleton'' graph~$\cS$ in the following way. We fix a partition $\Pi=\{U, W, \ldots\}$ of $V.$

A {\bf skeleton graph} is an edge-weighted graph ~$\cS= (\Pi;\cB,w)$ on the partition classes in $\Pi$ with possible loops. We will refer to a vertex of $\cS$ as a {\bf class} and the edges of the skeleton graph $\cS$ are referred to as {\bf bones}. We will use the following, very natural, notation: for $U, W \in \Pi$, if the pair $UW$ is a bone in~$\cB$, then $w(UW)$ is its weight (in $\cS$). If $U=W$ then it is a loop. If~$UW$ is not a bone in~$\cB$ then~$w(UW)=0$.Furthermore, $G[U,W]$ is the induced bipartite subgraph in $G$ while $G[U,U]=G[U]$ is the induced simple graph within $U.$ The graphs $G[U,W]$ are the \emph{component graphs} of $G.$ When the partition classes in $\Pi$ consist of all vertices with the same degrees, then all the introduced notions are equivalent to the notions used for the Joint Degree Matrix model in \cite{JDM}.

We will say that the realization $G$ of degree sequence $\d$ is \textbf{consistent} with the skeleton graph $\cS$ if
\begin{equation}\label{eq:skel}
\forall U,W \in \Pi \quad w(UW) = \big | E\left (G[U,W]\right )\!\big |.
\end{equation}
We say that the pair $(\bd, \cS)$ is {\bf graphical} if there exists a realization $G$ of $\bd$ which is consistent with $\cS$. See for example in Figure~\ref{fig:skeleton} a skeleton graph $\cS$ and a realization $G$ that is consistent with $\cS$.
\begin{figure}[t]
\centering
\begin{tikzpicture}[scale=.9]
\draw [line width=.3pt,dashed,fill=gray!50!white] (-.1,.5) rectangle (-.8,4.5);
\draw [line width=.3pt,dashed,fill=gray!50!white] (-.5,.5) rectangle (3.5,0);
\draw [line width=.3pt,dashed,fill=gray!50!white] (-.5,4.3) rectangle (3.5,4.8);
\draw [line width=.3pt,dashed,fill=gray!50!white] (3.2,.5) rectangle (3.8,4.5);
\draw [line width=.3pt,dashed,fill=gray!50!white] (-1,-.5)  circle (30pt);
\draw [line width=.3pt,dashed,fill=white] (-1,-.5)  circle (20pt);
\draw [line width=.3pt,dashed,fill=white] (-.45,.5)  circle (24pt);
\draw [line width=.3pt,dashed,fill=white] (-.45,4.5)  circle (24pt);
\draw [line width=.3pt,dashed,fill=white] (3.5,.5)  circle (24pt);
\draw [line width=.3pt,dashed,fill=white] (3.5,4.5)  circle (24pt);
\node (a) at (-.2,4.5)[label=above:$a$] {};
\node (b) at (-.6,4) [label=120:$b$] {};
\node (c) at (-.7,.7)[label=left:$c$] {};
\node (d) at (-.2,.2)[label=left:$d$] {};
\node (e) at (3.5,.5) [label=300:$e$]{};
\node (f) at (3.7,4)[label=above:$f$] {};
\node (g) at (3.3,4.5)[label=above:$g$] {};
\fill [color=black] (d)  circle (4pt);
\fill [color=black] (c) circle (4pt);
\fill [color=black] (b) circle (4pt);
\fill [color=black] (a) circle (4pt);
\fill [color=black] (g) circle (4pt);
\fill [color=black] (f) circle (4pt);
\fill [color=black] (e) circle (4pt);
\draw [line width=1pt] (c) -- (b);
\draw [line width=1pt] (c) -- (a);
\draw [line width=1pt] (d) -- (a);
\draw [line width=1pt] (c) .. controls (-3.8,-1.1) and (.4,-2.5).. (d);
\draw [line width=1pt] (a) -- (g);
\draw [line width=1pt] (e) -- (g);
\draw [line width=1pt] (e) -- (f);
\draw [line width=1pt] (e) -- (d);
\draw (-1,2.5) node {\small 3};
\draw (1.5,-.3) node {\small 1};
\draw (1.5,5.1) node {\small 1};
\draw (4.1,2.5) node {\small 2};
\draw (-2.3,0) node {\small 1};
\end{tikzpicture}
\caption{The dashed circles illustrate the classes of a skeleton graph~$\cS$, while its bones are indicated by gray dashed tubes. The dots and solid lines show a graph~$G$ that is consistent with~$\cS$. \label{fig:skeleton}}
\end{figure}
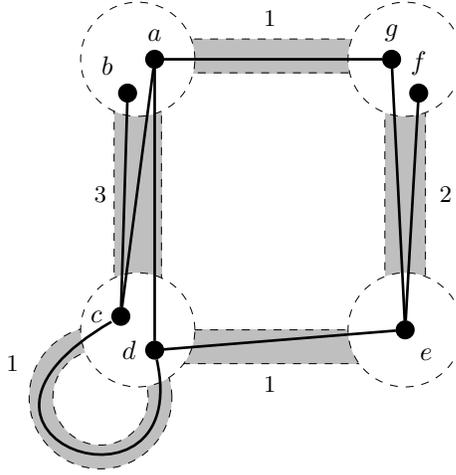
We will consider several different realizations of a degree sequence, even ones that are not consistent with the skeleton graph. However we will not consider realizations which have edges in a component graph without a corresponding bone. Therefore, we call a pair of vertices within a component graph with a corresponding bone a \emph{chord}, and each other pair of vertices a \emph{non-chord}. So a chord is a pair of vertices which may form an edge in a consistent realization.
A realization $G$ of a degree sequence $\bd$ is {\bf weakly consistent} with $\cS$ if all of the edges of $G$ are chords. Here property (\ref{eq:skel}) may not hold. Although the existence problem of weakly consistent realizations is not interesting in itself, we will use it as a tool. Before we discuss this, we recall some details about Tutte's $f$-factor theorem and its applications.

\medskip\noindent
In 1947 Tutte completely characterized the graphs with perfect matchings (see \cite{T47}). In 1952 he generalized this result for the so called $f$-factor problem. Two years later, in 1954, Tutte found the following brilliant way to reduce the problem of finding an $f$-factor in a given graph to finding a perfect matching (a 1-factor) in an auxiliary graph (\cite{T54}).
Let $G$ be a simple graph and let $f(v)$ be a non-negative integer for each $v\in V$. Then a subgraph $F$ of~$G$ in which each vertex~$v\in V$ has degree~$f(v)$ is an $f$-factor of $G.$ Any $f$-factor in $G$ can be represented as a perfect matching in the auxiliary graph $\mathbb{T}(G,f)$:
\begin{align}
V(\mathbb{T})&=\left\{ v_1,\ldots, v_{d(v)-f(v)} \big | v\in V(G)   \right \} \bigcup \left \{ e_v, e_u \big | vu=e\in E(G)    \right \} \label{eq:point}\\
E(\mathbb{T})&= \left\{v_ie_v \ \big | \ i=1,\ldots, d(v)-f(v); \ e=vu\in E(G) \right \} \bigcup \label{eq:edges}\\
& \qquad \bigcup \nonumber \left \{ e_ve_u \ \big | \ e=vu\in E(G) \right \}.
\end{align}
It is easy to see that there is a natural bijection between the $f$-factors in $G$ on one hand and the perfect matchings in $\mathbb{T}(G,f)$ on the other hand. More precisely, given the perfect matching $M$ in $\mathbb{T}(G,f)$, the requested subgraph in $G$ is
$$
\left\{e \in E \ \big | \ e=vu, e_ve_u \in  M  \right \}.
$$
In 1965, Edmonds described an effective algorithm to find a maximum matching in $G$ (\cite{E1965a}). Then, in the same year, he extended his approach for edge-weighted graphs (\cite{E1965b}): his blossom algorithm finds a maximum-weight perfect matching in strongly polynomial time. We will use this result extensively in this paper.
The classical existence problem for degree sequences can be easily solved by Tutte's $f$-factor theorem and Edmonds' algorithm: take as graph~$G$ the complete graph on $n$ vertices and for the function~$f$ the degree sequence. (See for example \cite{JSV} for an outstanding application.) However, this method is less efficient than the methods based on Havel's observation, and it can not be used to find all possible realizations (see for example \cite{Eg-tetel}).
Nevertheless, Edmonds' blossom algorithm is excellent to find weakly consistent realization to any skeleton graph:
\begin{observation}\label{th:weak}
For any degree sequence $\bd$ and any skeleton graph $\cS=(\Pi;\cB,w)$, one can decide in strongly polynomial time whether there exists a realization of~$\bd$ that is weakly consistent with~$\cS$.
\end{observation}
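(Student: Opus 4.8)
The plan is to reduce the question to an $f$-factor computation in a suitable auxiliary graph and then invoke the Tutte gadget together with Edmonds' blossom algorithm, both of which are available to us by the results recalled above. First I would build the graph~$H$ of chords: on the vertex set~$V$, join two vertices by an edge exactly when they form a chord, i.e.\ when they lie in a common component graph whose bone belongs to~$\cB$. Concretely, for every bone $UW\in\cB$ with $U\neq W$ I add all pairs between $U$ and $W$, and for every loop $UU\in\cB$ I add all pairs inside~$U$. By the definition of weak consistency, a realization of~$\bd$ is weakly consistent with~$\cS$ precisely when each of its edges is a chord, i.e.\ precisely when it is a subgraph of~$H$ in which every vertex~$v$ has degree exactly~$d(v)$. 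Thus a weakly consistent realization of~$\bd$ is exactly a $\bd$-factor of~$H$.

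Next I would apply Tutte's reduction to~$H$ with target function $f(v)=d(v)$ (the prescribed degree). If $f(v)>d_H(v)$ for some vertex~$v$, where $d_H(v)$ denotes the degree of~$v$ in~$H$, then no $\bd$-factor can exist and we answer ``no'' immediately; otherwise $d_H(v)-f(v)\ge 0$ for all~$v$ and the auxiliary graph $\mathbb{T}(H,f)$ of (\ref{eq:point})--(\ref{eq:edges}) is well defined. By the bijection recalled above, $H$ has a $\bd$-factor if and only if $\mathbb{T}(H,f)$ has a perfect matching. Running Edmonds' matching algorithm on $\mathbb{T}(H,f)$ therefore decides the existence of a weakly consistent realization, and comparing the size of a maximum matching with $|V(\mathbb{T})|/2$ tells us whether a perfect matching is present.

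It remains to verify that the procedure is strongly polynomial. The graph~$H$ has at most $\binom{n}{2}$ edges, so, using that each prescribed degree satisfies $d(v)\le n-1$ (which keeps the number of completion vertices $d_H(v)-f(v)$ polynomially bounded), the gadget $\mathbb{T}(H,f)$ has $O(n^2)$ vertices and $O(n^2)$ edges. Edmonds' algorithm finds a maximum matching of $\mathbb{T}(H,f)$ in time polynomial in its size and independent of the edge weights, hence in strongly polynomial time in~$n$.

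I do not expect a genuine obstacle: the statement is essentially an assembly of the $f$-factor reduction and Edmonds' algorithm. The only points that need care are the degenerate case $f(v)>d_H(v)$, which must be excluded before forming the gadget, and the bookkeeping that bounds the number of completion vertices so that the reduction stays polynomial rather than merely finite.
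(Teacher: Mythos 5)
Your proposal is correct and follows exactly the paper's own argument, which simply takes the chord graph defined by the skeleton as the input graph of the $f$-factor problem, sets $f$ equal to the given degree sequence, and invokes the Tutte gadget with Edmonds' algorithm. The extra details you supply (excluding vertices with $f(v)>d_H(v)$ and bounding the size of $\mathbb{T}(H,f)$) are sound bookkeeping that the paper leaves implicit.
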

\begin{proof}
The graph $G$ in the $f$-factor problem consists of all chords defined by the skeleton graph, while the $f$-function is equal to the given degree sequence.
\end{proof}
\noindent
It is clear that the $f$-factor approach cannot directly find consistent realizations for any "reasonable"  skeleton graph problem: it has no control over the exact number of edges in the component graphs. We even cannot enforce that all bones contain at least one edge in the derived realization. We need additional ideas to find consistent realizations. For that end we will extensively use some restricted versions of Havel's swap operation.

\begin{definition}[{\bf unrestricted} / {\bf restricted} / {\bf $\cS$-preserving} swap operations]\ \\[-23pt]
\begin{enumerate}[{\rm (R1)}]
\item Let $G$ be a realization of the graphical sequence $\bd$, if $a,b,c$ and~$d$ are vertices of $G$ satisfying $ab,cd\in E$ and $bc,ad\notin E$, then the graph $G' =(V,E')$ with $E'=E\cup\{bc,ad\} \setminus \{ab,cd\}$ is another realization of~$\bd$. This \textbf{swap} operation, denoted by $ab,cd\Rightarrow bc,ad$, was introduced by Havel \cite{havel}. It is also known, for example, as {\it switch} or {\it rewiring} or {\it infusion} operation.
\item Let $\cS$ be a skeleton graph, and let $G$ be a realization of~$\bd$ that is weakly consistent with~$\cS$. If all vertex pairs in (R1) are chords, then $G'$ will be weakly consistent with $\cS.$ Then the operation is a {\bf restricted swap} operation.
\item If $G$ is consistent with the skeleton graph $\cS$ and $G'$ is also consistent with~$\cS$, then this operation is an {\bf $\cS$-preserving swap} operation.
\end{enumerate}
\end{definition}
\noindent
As we mentioned earlier already Petersen proved (\cite{pet}) that any realization of a given degree sequence can be transformed into another one by consecutive unrestricted swap operations. Havel's result gives a rather  crude algorithm to find such a swap sequence: the number of steps may be twice the number of edges in the worst case. It is very natural to ask what the minimum length of such a swap sequence is. This question was studied in details by Erd\H{o}s, Kir\'aly and Mikl\'os (\cite{distance}). Next we will summarize  the main findings of this paper:

\medskip\noindent {\bf Regular swap sequences}:
Let $G$ and $G'$ be two realizations of the degree sequence $\bd.$ The symmetric difference $\nabla=E(G) \triangle E(G')$ of their edges has a natural 2-coloration: an edge in $\nabla$ is {\bf red} or {\bf blue} depending on whether it belongs to $G$ or $G'$. Denote by $r(G,G')$ the number of red edges in the symmetric difference (which is of course also the number of blue edges).
\begin{lemma}\label{th:1}
Every vertex in $\nabla$ has an equal number of red and blue adjacent edges. Moreover, the symmetric difference $\nabla$ can be decomposed into even length alternating (with respect to the coloration) circuits (closed walks), where no circuit contains any vertex more than twice. \qed
\end{lemma}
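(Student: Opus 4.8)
The plan is to prove the two assertions in order, since the second is a decomposition argument that rests on the balance established in the first.

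For the first assertion I would argue locally at each vertex. Fix $v\in V(\nabla)$ and let $r_v$ and $b_v$ denote the numbers of red and blue edges of $\nabla$ incident to $v$, and let $c_v$ be the number of edges incident to $v$ that belong to both $E(G)$ and $E(G')$. The red edges at $v$ together with the common edges are exactly the edges of $G$ at $v$, so $d(v)=r_v+c_v$ when degrees are measured in $G$; likewise $d(v)=b_v+c_v$ in $G'$. Since $G$ and $G'$ are realizations of the same sequence $\bd$, these two counts are equal, whence $r_v=b_v$. Thus every vertex of $\nabla$ meets equally many red and blue edges.

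For the decomposition, the balance $r_v=b_v$ is exactly what is needed to set up an alternating transition system. At each vertex $v$ I would fix an arbitrary bijection between the red edge-ends and the blue edge-ends at $v$, and impose the rule ``enter on a red edge, leave on its paired blue edge, and conversely.'' This assigns to every traversed edge a unique successor of the opposite colour, so, $\nabla$ being finite, the transition system partitions the edges of $\nabla$ into closed walks whose consecutive edges alternate in colour. An alternating closed walk automatically has even length, so this already yields a decomposition into even-length alternating circuits; the remaining work is to force the bound of at most two visits per vertex.

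The key step is a local reconnection that shortens any offending circuit. Suppose some circuit $C$ of the decomposition passes through a vertex $v$ at least three times. In an alternating circuit each passage through $v$ enters and leaves on edges of opposite colours, so each passage is of one of the two types $(\text{red-in},\text{blue-out})$ or $(\text{blue-in},\text{red-out})$. With three or more passages, two of them, say with in/out edge-ends $(a_{\mathrm{in}},a_{\mathrm{out}})$ and $(b_{\mathrm{in}},b_{\mathrm{out}})$, are of the same type. Re-pairing $a_{\mathrm{in}}$ with $b_{\mathrm{out}}$ and $b_{\mathrm{in}}$ with $a_{\mathrm{out}}$ keeps both transitions colour-alternating, and because the two passages lie on the same closed walk, the swap splits $C$ into two shorter alternating circuits. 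Iterating this splitting strictly increases the number of circuits while preserving the total number of edges, so the process terminates, and in the final decomposition no circuit visits any vertex more than twice. I expect this reconnection to be the main obstacle: one must check that the re-pairing simultaneously preserves alternation at $v$ and genuinely separates $C$ into two nonempty closed walks (rather than merging two walks into one), which is where the ``same type'' pigeonhole and the single-walk hypothesis are both used. Everything else reduces to the degree count above and the standard transition-system fact.
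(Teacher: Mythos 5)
Your proof is correct. The paper itself states Lemma~\ref{th:1} without proof, quoting it (note the \qed in the statement) from Erd\H{o}s--Kir\'aly--Mikl\'os \cite{distance}, so there is no in-paper argument to compare against; your route --- the degree count $d_G(v)=r_v+c_v$, $d_{G'}(v)=b_v+c_v$ forcing $r_v=b_v$, followed by a red--blue transition system whose closed walks are then refined by re-pairing --- is exactly the standard proof of this result. You also handle the two delicate points correctly: among three passages through a vertex two must have the same in/out colour type, and re-pairing those two preserves alternation while, being a transposition applied inside a single cyclic order, it genuinely splits the circuit into two nonempty closed walks; moreover, the fact that two \emph{opposite}-type passages cannot be split at that vertex without breaking alternation is precisely why the lemma asserts the bound ``at most twice'' rather than ``at most once.''
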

\noindent
Consider now two realizations $G$ and $G'$ such that $\nabla$ is one alternating circuit, $C$.
\begin{lemma}\label{th:2}
There exists a sequence of consecutive swap operations transforming realization $G$ into $G'$ with the following properties: {\rm (i)} Along the process every swap is applied for vertex pairs belonging completely to $V(C)$. {\rm (ii)} If $G_1$ and $G_2$ are two consecutive realizations along the sequence then $r(G_1,G') - r(G_2,G') \in\{0,1,2\}$. {\rm {(iii)}} The length of this swap sequence is $r(G,G')-1.$ \qed
\end{lemma}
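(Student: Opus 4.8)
The plan is to argue by induction on $k:=r(G,G')$, so that $C$ is an alternating circuit of length $2k$; since no alternating circuit can have length $2$, we always have $k\ge 2$. Label the circuit $u_0u_1\cdots u_{2k-1}u_0$ so that the red edges (those of $G$) are the $u_{2i}u_{2i+1}$ and the blue edges (those of $G'$) are the $u_{2i+1}u_{2i+2}$, indices read modulo $2k$. I would actually prove the following strengthened statement by induction: $G$ can be transformed into $G'$ by swaps, each using only vertices of $V(C)$, so that after every swap the symmetric difference with $G'$ is again a \emph{single} alternating circuit with vertex set inside $V(C)$, whose length drops by exactly $2$ at each step until it reaches $G'$. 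Granting this, (i) is built in, (iii) holds because $k-1$ steps are needed to shrink a length-$2k$ circuit down to $G'$ in jumps of $2$, and (ii) holds because each step before the last lowers $r$ by exactly $1$ while the final step, acting on a $4$-circuit, lowers it by $2$.

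The base case $k=2$ is one swap: the $4$-circuit $u_0u_1u_2u_3u_0$ has $u_0u_1,u_2u_3\in E(G)$ and $u_1u_2,u_3u_0\notin E(G)$, so $u_0u_1,u_2u_3\Rightarrow u_1u_2,u_3u_0$ turns $G$ into $G'$ and lowers $r$ by $2$.

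For the inductive step assume $k\ge 3$. I would first attempt the \emph{local shortening swap} at the blue edge $B=u_1u_2$, namely $u_0u_1,u_2u_3\Rightarrow u_1u_2,u_0u_3$, which removes the two red edges flanking $B$ and inserts $B$ together with the chord $u_0u_3$. In the clean case where $u_0,u_1,u_2,u_3$ are distinct and $u_0u_3\notin E(G)\cup E(G')$, this swap is legal, it resolves $B$, it turns $u_0u_3$ into a new red chord, and — because $\nabla=C$ was a single circuit and $u_0u_3$ lay in neither $G$ nor $G'$ — it merges $C$ into the single alternating circuit $u_0u_3u_4\cdots u_{2k-1}u_0$ of length $2k-2$, lowering $r$ by exactly $1$. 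One then applies the induction hypothesis to the resulting pair and prepends this swap.

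The main obstacle is the remaining case, where the short chord is blocked: two of $u_0,u_1,u_2,u_3$ coincide, or $u_0u_3\in E(G)\cap E(G')$ is a shared chord, or $u_0u_3\in E(G')$ is blue (in which case the naive swap would \emph{over}-shoot, cutting $r$ by $2$ and spoiling the exact count demanded by (iii)). That all short chords can be simultaneously blocked is genuine: it already happens when $G$ and $G'$ are two $6$-cycles sharing three chords. Here I would keep the aim of inserting $B=u_1u_2$ but reach it through a different partner: replace the red edge $u_2u_3$ by another edge $u_2w\in E(G)$ with $w\in V(C)$ and perform $u_0u_1,u_2w\Rightarrow u_1u_2,u_0w$, choosing the neighbour $w$ of $u_2$ so that the swap is legal ($u_0\ne w$ and $u_0w\notin E(G)$) and so that the new symmetric difference is once more a single alternating circuit of length $2k-2$, i.e.\ the step still lowers $r$ by exactly $1$. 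In the two-hexagon example one takes $w$ to be the blue-predecessor $u_5$ of $u_0$: then $u_2w$ is a shared chord, $u_0w$ is blue, and the step indeed nets a reduction of $1$. The heart of the proof is to show that such a $w$ always exists and that the surviving difference is again a single length-$(2k-2)$ circuit; this is precisely where both hypotheses of Lemma~\ref{th:1} are used, namely that $\nabla$ is a \emph{single} circuit and that \emph{no vertex occurs on it more than twice}, since together they bound how many candidate chords can be forbidden at once and rule out the degenerate coincidences among $u_0,u_1,u_2,w$. Verifying this existence case by case is the technical core; everything else is bookkeeping on $r$.
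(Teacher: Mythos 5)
The paper does not actually prove Lemma~\ref{th:2}: it is imported verbatim from \cite{distance} with a pointer, so the benchmark is the argument given there. Measured against it, your proposal has a genuine gap, and it sits exactly where you place the ``technical core.'' Your induction rests on a strengthened invariant --- after every swap the symmetric difference with $G'$ is again a \emph{single} alternating circuit and $r$ drops by exactly $1$ (by $2$ only at the last step) --- and in the blocked case you reduce this to the claim that a partner $w$ with $u_2w\in E(G)$, $u_0w\notin E(G)$ always exists whose swap again nets a drop of exactly $1$ and preserves the single-circuit property. You verify this in one hexagon example and then defer the general existence proof. That deferred claim \emph{is} the lemma; without it the induction does not close. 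Worse, the invariant you insist on is strictly stronger than what the statement asserts: item (ii) explicitly allows consecutive realizations with $r(G_1,G')-r(G_2,G')=0$, and this allowance is precisely what the proof in \cite{distance} uses. There, when the closing chord $u_0u_3$ is black (an edge of both $G$ and $G'$), one performs a swap involving that black chord and a red circuit edge farther along $C$; this leaves $r$ unchanged but splits $C$ into two shorter alternating circuits of lengths $2k_1$ and $2k_2$ with $k_1+k_2=k$, and recursing on the pieces gives $1+(k_1-1)+(k_2-1)=k-1$ swaps, which is exactly how (iii) is met while (ii) picks up the value $0$. By refusing $0$-steps you take on a burden the lemma never imposes, and you do not discharge it.

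A second concrete defect: Lemma~\ref{th:1} guarantees only that no vertex occurs on $C$ more than \emph{twice}; repeated vertices are permitted, not excluded. You invoke the two-visit bound to ``rule out the degenerate coincidences among $u_0,u_1,u_2,w$,'' but it does the opposite. For instance, an alternating closed walk can have $u_3=u_0$ (the vertex then has red degree $2$ and blue degree $2$ in $\nabla$, consistent with Lemma~\ref{th:1}), in which case your clean-case chord $u_0u_3$ is a loop and the local shortening swap is not even defined; similarly $u_0u_3$ may coincide with an edge of the circuit itself, in which case the ``clean'' swap overshoots and the resulting difference need not be a single circuit. These degeneracies are exactly the cases your case analysis would have to work through, and they are handled in \cite{distance} by the splitting mechanism you have ruled out. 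In short: the skeleton of your induction and the arithmetic for (ii) and (iii) are consistent, but the pivotal existence claim is asserted rather than proved, and the single-circuit, always-drop-by-one route is likely not salvageable as stated --- the presence of $0$ in (ii) is the statement's own hint that the intended proof goes through circuit-splitting swaps instead.
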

The described swap sequence is called a {\bf regular swap sequence}.

\begin{theorem}[Erd\H{o}s - Kir\'aly - Mikl\'os, \cite{distance}] \label{th:kiraly} \label{lem:shortest}
Let $G$ and $G'$ be arbitrary realizations of degree sequence $\bd.$ Every shortest possible swap sequence can be reordered such that this realigned sequence is identical with a series of subsequent regular swap sequences, corresponding to a circuit decomposition of the symmetric difference $\nabla.$ The length of this swap sequence is $r(G,G') - $ the maximum possible number of circuits in a decomposition, and hence is at most $r(G,G')-1$.
\end{theorem}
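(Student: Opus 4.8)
The plan is to squeeze the length of a shortest swap sequence between matching upper and lower bounds, both equal to $r(G,G')-t_{\max}$, where $t_{\max}$ denotes the largest number of circuits occurring in any alternating circuit decomposition of $\nabla$; the reordering statement will then fall out of the tightness of these bounds.

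\textbf{Upper bound (an explicit structured sequence).} First I would invoke Lemma~\ref{th:1} to fix a decomposition of $\nabla$ into alternating circuits and, among all such decompositions, choose one $\{C_1,\dots,C_t\}$ with $t=t_{\max}$ circuits. Writing $r_j$ for the number of red edges of $C_j$, so that $\sum_j r_j=r(G,G')$, I would resolve the circuits one after another. Starting from $G$, I compare the current realization with the one obtained by replacing exactly the red edges of $C_1$ by its blue edges; their symmetric difference is the single circuit $C_1$, so Lemma~\ref{th:2} supplies a regular swap sequence of length $r_1-1$ which, by part~(i), only rewires edges inside $V(C_1)$. Repeating for $C_2,\dots,C_t$ and using that the circuits are edge-disjoint (so a later regular sequence neither destroys nor recreates an edge of another circuit — the one routine non-interference check), I obtain a swap sequence that is a concatenation of regular swap sequences and has total length $\sum_j(r_j-1)=r(G,G')-t_{\max}\le r(G,G')-1$.

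\textbf{Lower bound (a potential argument).} For the matching lower bound I would attach to every realization $H$ the potential $\Phi(H)=r(H,G')-c(H)$, where $c(H)$ is the maximum number of alternating circuits in a decomposition of $E(H)\triangle E(G')$. Since every alternating circuit carries at least one red edge, $c(H)\le r(H,G')$, so $\Phi\ge 0$ throughout, with $\Phi(G')=0$ and $\Phi(G)=r(G,G')-t_{\max}$. The lower bound then reduces to a single claim: a swap lowers $\Phi$ by at most $1$. Tracking which of the two deleted and two inserted edges lie in $E(G')$ shows the red count $r$ can fall by at most $2$, and falls by exactly $2$ only when the swap deletes two red edges and inserts two blue ones, i.e.\ strips an alternating $4$-cycle out of the current symmetric difference. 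The heart of the proof — the step I expect to be the main obstacle — is to control the companion quantity $c$ and establish $\Delta r-\Delta c\le 1$ for every swap, where $\Delta r,\Delta c$ are the decreases in $r$ and in $c$. This means bounding how much the maximum alternating-circuit count can react to a local four-edge modification: one must exclude swaps that would raise $c$ by two without touching $r$, and show that a swap stripping off a $4$-cycle ($\Delta r=2$) merges at most two circuits of an optimal decomposition into one ($\Delta c\le 1$). I would argue this inside a transition system realizing $c(H)$ — a pairing, at each vertex, of the red edge-ends with the blue edge-ends, whose closed alternating walks are exactly the circuits — and show that rerouting only the transitions at the at most four affected vertices changes the number of closed walks by at most the amount permitted by the change in $r$. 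Granting this inequality, $\Phi$ drops by at most $1$ per swap, so every swap sequence has length at least $\Phi(G)=r(G,G')-t_{\max}$, matching the construction.

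\textbf{Reordering.} Finally, for a shortest sequence the two bounds force $\Phi$ to fall by exactly $1$ at each of its $r(G,G')-t_{\max}$ steps, which pins down the tight swaps and, in particular, makes each of them act on the edges of a single circuit of a maximum decomposition. Swaps whose four vertices lie in different circuits touch disjoint edge sets and hence commute, so I would sort the sequence to bring together all swaps resolving a common circuit; each resulting block is then a regular swap sequence for that circuit. This yields the claimed realignment together with the length formula $r(G,G')-t_{\max}$.
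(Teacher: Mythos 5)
The paper does not prove Theorem~\ref{th:kiraly}; it is quoted from \cite{distance}, so your attempt must be measured against the Erd\H{o}s--Kir\'aly--Mikl\'os argument itself. Your architecture coincides with theirs in outline: upper bound by resolving a maximum alternating-circuit decomposition circuit by circuit via the regular sequences of Lemma~\ref{th:2}, lower bound via a potential that can drop by at most one per swap. Your upper bound is in fact complete, and even slightly over-cautious: applying Lemma~\ref{th:2} to the pair $(H,\,H\triangle C_j)$ requires no non-interference check, since the symmetric difference of those two realizations is exactly the single circuit $C_j$, so the lemma hands you a legal swap sequence of length $r_j-1$ verbatim, giving total length $r(G,G')-t_{\max}$.

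The genuine gap is exactly where you flag it, and it is not a routine verification but the actual mathematical content of the theorem in \cite{distance}. Your lower bound rests entirely on the inequality that a single swap decreases $\Phi(H)=r(H,G')-c(H)$ by at most $1$, and your proof of it is deferred (``granting this inequality''). A swap toggles four vertex pairs which may meet up to four different circuits of an optimal decomposition of the current symmetric difference; it can merge circuits, split them, or create a decomposition of an entirely different shape, and the decomposition realizing $c(H)$ need not survive into $c(H')$. The transition-system idea (pairing red and blue edge-ends at each vertex so that the circuits are the closed alternating walks) is a plausible vehicle, but you give no argument bounding how rerouting the transitions at the affected vertices changes the number of closed walks, and in particular no exclusion of the bad cases you yourself name ($c$ rising by two with $r$ untouched). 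A second, independent gap sits in the reordering step. Tightness forces $\Phi$ to drop by exactly one per step, but this does not localize each swap to a single circuit of one \emph{fixed} maximum decomposition of $\nabla$, because the maximizing decomposition of $E(H)\triangle E(G')$ may change from step to step; that localization is a claim needing proof, not a consequence of the bound. Moreover, your commutation argument tacitly assumes the circuits are vertex-disjoint: by Lemma~\ref{th:1} a vertex may lie on two circuits (and may appear twice on one), and by Lemma~\ref{th:2}(i) the swaps of a regular sequence for $C_j$ use vertex pairs inside $V(C_j)$ that are not edges of $C_j$, so two swaps assigned to different circuits can involve a common vertex pair; ``touch disjoint edge sets and hence commute'' therefore fails as stated, and an actual exchange argument is required. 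In short: correct skeleton, sound upper bound, but the two load-bearing steps --- the per-swap bound on $\Phi$ and the reordering of a shortest sequence --- are missing.
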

\noindent With some lack of precision we also call the swap sequence described above as a {\bf regular swap sequence}, and a shortest regular swap sequence, respectively.
\noindent Two further useful observations:
\begin{remark}\label{th:extend}\   \\[-17pt]
\begin{enumerate}[{\rm (i)}]
\item Any particular alternating circuit in $\nabla$ can be extended into a complete decomposition of $\nabla.$\\[-17pt]
\item If an arbitrary swap sequence transforms $G$ into $G'$ then the {\em inverse} swap sequence transforms $G'$ into $G$ (here we do not define the notion inverse, because it is self-evident). \\[-17pt]
\end{enumerate}
\end{remark}
\noindent The following is easily verified.
\begin{observation}
In an $\cS$-consistent realization $G$, a swap $ab,cd\Rightarrow bc,ad$ is $\cS$-preserving if and only if $(a$ and $c)$ or $(b$ and $d)$ are in the same class of $\cS$.
\end{observation}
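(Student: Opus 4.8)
The plan is to reduce the statement to an elementary combinatorial identity about the four classes containing the swapped vertices. Write $A,B,C,D$ for the classes of $\Pi$ containing $a,b,c,d$ respectively (these need not be distinct). Since every swap preserves all vertex degrees, $G'=(V,E')$ is automatically a realization of $\bd$; hence $\cS$-preservation is purely a question of whether the bone weights survive, i.e. whether $|E(G'[U,W])| = |E(G[U,W])| = w(UW)$ for all $U,W\in\Pi$. Because the only edges in which $G$ and $G'$ differ are the deleted pair $ab,cd$ and the inserted pair $bc,ad$, the only bones whose edge count can change are those indexed by the unordered class pairs $\{A,B\}$, $\{C,D\}$, $\{B,C\}$, $\{A,D\}$.

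First I would record the net effect of the swap on these four counts: deleting $ab$ and $cd$ decrements the counts on the bones $\{A,B\}$ and $\{C,D\}$, while inserting $bc$ and $ad$ increments those on $\{B,C\}$ and $\{A,D\}$. As $G$ is $\cS$-consistent, $G'$ is $\cS$-consistent exactly when every such increment is cancelled by a decrement on the same bone. This cancellation is precisely the assertion that the two multisets of class pairs $\bigl\{\{A,B\},\{C,D\}\bigr\}$ and $\bigl\{\{B,C\},\{A,D\}\bigr\}$ coincide. (Note this condition also subsumes the requirement that the new edges be chords: a bone that gains an edge must coincide with one that loses an edge, hence already carries positive weight.) So the observation is equivalent to the set-theoretic claim that these two multisets are equal if and only if $A=C$ or $B=D$.

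The backward direction is a one-line substitution: if $A=C$ then both multisets equal $\bigl\{\{A,B\},\{A,D\}\bigr\}$, and if $B=D$ then both equal $\bigl\{\{A,B\},\{B,C\}\bigr\}$. For the forward direction I would split according to the two ways a two-element multiset equality can be realized: either $\{A,B\}=\{B,C\}$, which forces $A=C$, or $\{A,B\}=\{A,D\}$, which forces $B=D$; in each case the remaining pair matches automatically (e.g. $A=C$ gives $\{C,D\}=\{A,D\}$).

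The only point requiring genuine care is this last step, where some classes may coincide: an intra-class edge contributes to a loop-bone $\{U,U\}$, so a pair such as $\{A,B\}$ may degenerate to a singleton. I expect verifying that, say, $\{A,B\}=\{B,C\}$ still forces $A=C$ even when $A=B$ to be the sole obstacle; it is a short finite case check and is the main, though minor, difficulty in an otherwise direct bookkeeping argument.
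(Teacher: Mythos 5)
The paper gives no proof of this Observation, stating only that it ``is easily verified,'' and your argument is a correct, complete version of exactly that intended routine check: reduce $\cS$-preservation to the multiset identity $\bigl\{\{A,B\},\{C,D\}\bigr\}=\bigl\{\{B,C\},\{A,D\}\bigr\}$ and show it holds iff $A=C$ or $B=D$. Your handling of the degenerate cases (coinciding classes and loop bones, and the new edges automatically being chords) is sound, so nothing is missing.
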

\noindent
One of our motivating questions is whether the space of realizations of a graphical sequence consistent with a given skeleton graph is connected using swaps. In the classic cases this is true (see, for example, Theorem \ref{th:kiraly}) as well as in the Joint Degree Matrix case (see \cite{JDM}). However, as we will see later on, there is not so neat answer for the skeleton graph problems. The obvious reason for this is that Theorem \ref{th:kiraly} does not apply for this case, since a regular swap sequence does not necessarily use $\cS$-preserving swap operations.

\section{Graph realizations with a given number of edges crossing a given bipartition}
We start our investigations with one of the most simple skeleton graphs. Let $V=\{v_1,\ldots ,v_n\}$ be a vertex set, $\bd$ a degree sequence, $\Pi^2 = (U,W)$ a partition of~$V$, $k\in\N$ and let $\cS(k)= (\Pi^2; \cB,w)$ be a skeleton graph with two vertices, a weight-$k$ $UW$ bone and two loops, whose weights are completely defined by $\bd$ and $k.$ The edges with end vertices in both classes are called the {\bf crossing} edges.

The conventional description is the following: Given the degree sequence $\bd$, a bipartition of the vertex set and a natural number~$k$, decide if there exists a graph that realizes the given degree sequence and has precisely~$k$ crossing edges.

When $k$ is equal to the total number of edges, i.e. both classes induce the empty graph, then this coincides with the usual bipartite degree sequence problem.

When $W$ has no inner edges, then no direct greedy method (like Havel's lemma) is known to construct a consistent realization. The reason for that is simple: we just do not know the $U$-side of the bipartite degree sequence in the component graph $G[U,W]$. However, the Tutte - Edmonds method provides an effective solution for the existence problem.

\subsection{Existence}\label{subsec:exist}
\noindent It is clear that any realization of $\bd$ is automatically weakly consistent. In this subsection we consider the existence problem for consistent realizations.
\begin{theorem}\label{thm:2part}
We can decide in polynomial time whether there exists a realization of~$\d$ that is consistent with $\cS(k)$.
\end{theorem}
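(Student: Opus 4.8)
The plan is to reduce the decision problem to two weighted perfect‑matching computations together with one ``interval'' lemma. Write $D:=\sum_{v\in U} d(v)$, and observe that for $\cS(k)$ every pair of vertices is a chord (both loops and the $UW$ bone are present), so \emph{every} realization of $\bd$ is automatically weakly consistent, and consistency with $\cS(k)$ is equivalent to having exactly $k$ crossing edges (the two loop weights are then forced). Thus the task is: decide whether some realization of $\bd$ has exactly $k$ crossing edges. I would compute the minimum and maximum possible number of crossing edges over all realizations, call them $k_{\min}$ and $k_{\max}$, and then show that the answer is ``yes'' precisely when $k\equiv D \pmod 2$ and $k_{\min}\le k\le k_{\max}$.

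To obtain $k_{\min}$ and $k_{\max}$ I would run the Tutte--Edmonds machinery on $G=K_n$ with $f=\bd$, exactly as in Observation~\ref{th:weak}, but now with edge weights: in the gadget $\mathbb{T}(K_n,\bd)$ assign weight $1$ to each edge $e_ve_u$ coming from a crossing pair $vu$ and weight $0$ to all other edges. Under the bijection between perfect matchings of $\mathbb{T}(K_n,\bd)$ and $f$-factors of $K_n$, the weight of a matching equals the number of crossing edges of the corresponding realization. Hence a maximum-weight perfect matching yields $k_{\max}$ and a minimum-weight perfect matching (negate the weights) yields $k_{\min}$; both are found in strongly polynomial time by Edmonds' blossom algorithm. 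If $\mathbb{T}(K_n,\bd)$ has no perfect matching then $\bd$ is not graphical and we answer ``no''.

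The heart of the proof is the claim that the set of attainable crossing-edge counts is exactly the arithmetic progression $\{k_{\min},k_{\min}+2,\dots,k_{\max}\}$. First, a parity invariant: counting edge-endpoints lying in $U$ shows that the number of crossing edges is $\equiv D \pmod 2$ in every realization, so all attainable values share the parity of $D$; equivalently, a one-line case check over the four classes of $a,b,c,d$ shows that a single swap $ab,cd\Rightarrow bc,ad$ changes the crossing count by an even number (one verifies that $\chi(ad)+\chi(bc)-\chi(ab)-\chi(cd)$ is even, where $\chi$ marks crossing pairs). It then remains to rule out gaps of size $2$. Suppose $G$ is a realization with $c<k_{\max}$ crossing edges and let $H$ be one with $k_{\max}$. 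Two-colour $\nabla=E(G)\triangle E(H)$ (red for $G$, blue for $H$); since the blue crossing edges outnumber the red ones by $k_{\max}-c>0$, Lemma~\ref{th:1} lets me pick an alternating circuit $C$ of the decomposition whose blue crossing edges strictly outnumber its red ones. Flipping the red edges of $C$ to its blue edges preserves all degrees, so by Remark~\ref{th:extend}(i) and Lemma~\ref{th:2} there is a regular swap sequence, using only vertices of $C$, realising this flip through genuine realizations of $\bd$. Along it the crossing count starts at $c$ and ends at $c+\sigma$, where the surplus $\sigma>0$ by the choice of $C$ and is even by the parity invariant, so $\sigma\ge 2$; each swap changes the count by $0$ or $\pm2$. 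A lattice-walk argument (a walk on $c+2\mathbb{Z}$ with steps in $\{-2,0,2\}$ from $c$ to a value $\ge c+2$ must visit $c+2$) then produces a realization with exactly $c+2$ crossing edges. Iterating upward from $k_{\min}$ gives every value of the progression, and the symmetric downward argument is identical.

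Putting these together, the algorithm makes two blossom-algorithm calls to compute $k_{\min}$ and $k_{\max}$ and accepts iff $k$ has the parity of $D$ and lies in $[k_{\min},k_{\max}]$; correctness is exactly the interval lemma. I expect that lemma --- specifically the absence of gaps of size $2$ --- to be the main obstacle, since it is the only place where the combinatorics of swaps (rather than a black-box matching routine) is genuinely needed; the parity invariant and the matching reduction are routine once the gadget weighting is chosen.
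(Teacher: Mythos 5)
Your proposal is correct and follows essentially the same route as the paper: both observe that consistency with $\cS(k)$ amounts to attaining exactly $k$ crossing edges, compute the extremal values by running Edmonds' blossom algorithm on the weighted Tutte gadget $\mathbb{T}(K_n,\bd)$ with $0/1$ weights distinguishing crossing chords, and fill the interval by the parity-plus-intermediate-value argument of Lemma~\ref{th:parity}. Your only deviations are cosmetic --- a direct endpoint-counting proof of the parity invariant (the paper infers it from equation~(\ref{eq:par}) along a swap sequence) and a circuit-by-circuit surplus selection via Lemma~\ref{th:1} and Lemma~\ref{th:2}, where the paper applies Theorem~\ref{th:kiraly} to the two realizations directly; both rest on the same regular-swap machinery.
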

The number of crossing edges in a realization~$G$ is denoted $\epsilon(G)$. The set of all realizations of degree sequence $\bd$ with~$\ell$ crossing edges is denoted~$\GG_{\ell}(\bd)$.

Let $G\in \GG_\ell(\bd)$ and let $G'$ be the realization derived from $G$ by the swap operation $ac, bd \Rightarrow bc, ad$. By simple case analysis it is easy to see that
\begin{equation}\label{eq:par}
\left | \epsilon(G) - \epsilon(G')\right | \in\{0,2\}.
\end{equation}
\begin{lemma}\label{th:parity}
Let $G$ and $G'$ be realizations of $\bd$ with $\epsilon(G) < \epsilon(G')$. Then
\begin{enumerate}[{\rm (i)}]
\item $\epsilon(G) \equiv \epsilon(G') \pmod{2}$;
\item for all $\ell\in \{\epsilon(G), \epsilon(G)+2,\ldots, \epsilon(G')\}$, there exists a realization $G'' \in\GG_\ell(\bd).$
\end{enumerate}
\end{lemma}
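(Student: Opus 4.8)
The plan is to prove both parts using the parity relation (\ref{eq:par}) together with Petersen's theorem that any two realizations of $\bd$ are connected by a sequence of (unrestricted) swaps.

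For part (i), I would fix a swap sequence $G = G_0, G_1, \ldots, G_m = G'$ transforming $G$ into $G'$, which exists by Petersen's result. By (\ref{eq:par}), consecutive realizations satisfy $\epsilon(G_{t+1}) - \epsilon(G_t) \in \{-2, 0, 2\}$, so every step changes the number of crossing edges by an even amount. Summing the telescoping differences, $\epsilon(G') - \epsilon(G) = \sum_t (\epsilon(G_{t+1}) - \epsilon(G_t))$ is a sum of even integers and hence even. This gives $\epsilon(G) \equiv \epsilon(G') \pmod 2$ immediately.

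For part (ii), I would use the same swap sequence and a discrete intermediate-value argument. The sequence $\epsilon(G_0), \epsilon(G_1), \ldots, \epsilon(G_m)$ starts at $\epsilon(G)$, ends at $\epsilon(G')$, and by (\ref{eq:par}) changes by at most $2$ in absolute value at each step, always preserving parity. Since all values share the common parity of $\epsilon(G)$ established in (i), and the sequence moves in steps of $0$ or $\pm 2$ from $\epsilon(G)$ up to $\epsilon(G')$, every value $\ell$ in the arithmetic progression $\{\epsilon(G), \epsilon(G)+2, \ldots, \epsilon(G')\}$ must actually be attained by some $\epsilon(G_t)$. Formally: let $\ell$ be such a value; consider the largest index $t$ with $\epsilon(G_t) \le \ell$ (such $t$ exists since $\epsilon(G_0) = \epsilon(G) \le \ell \le \epsilon(G') = \epsilon(G_m)$). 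If $\epsilon(G_t) < \ell$ then $\epsilon(G_{t+1}) > \ell$, but the jump from $\epsilon(G_t)$ to $\epsilon(G_{t+1})$ is at most $2$ while both values differ from $\ell$ by at least $2$ in opposite directions (by parity, any value $\neq \ell$ differs from $\ell$ by at least $2$), a contradiction. Hence $\epsilon(G_t) = \ell$, and $G'' := G_t \in \GG_\ell(\bd)$ is the desired realization.

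The only mild subtlety is the step counting in the intermediate-value argument: one must use parity to rule out a single swap jumping \emph{over} a target value $\ell$, which is exactly where the fact that each step changes $\epsilon$ by $0$ or $\pm 2$ (rather than by $\pm 1$) is essential, combined with the observation that distinct values of the same parity differ by at least $2$. This is the heart of the argument, and it is genuinely elementary; no appeal to the finer machinery of regular swap sequences (Lemma~\ref{th:2} or Theorem~\ref{th:kiraly}) is needed, since we only require existence of \emph{some} swap sequence and the local parity constraint (\ref{eq:par}).
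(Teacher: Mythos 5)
Your proof is correct, and it follows the same skeleton as the paper's argument---walk along a swap sequence from $G$ to $G'$, observe via (\ref{eq:par}) that $\epsilon$ changes by $0$ or $\pm 2$ at each step, and conclude by parity plus a discrete intermediate-value argument---but you get there with strictly lighter machinery. The paper invokes Theorem~\ref{th:kiraly} (the Erd\H{o}s--Kir\'aly--Mikl\'os regular swap sequence theorem) to produce the connecting sequence, whereas you correctly observe that the classical Petersen/Havel fact that \emph{any} swap sequence connects two realizations of the same degree sequence suffices: nothing in the argument uses regularity, circuit decompositions, or the length bound $r(G,G')-1$. You also spell out the one step the paper leaves implicit, namely why the sequence of $\epsilon$-values cannot jump over a target $\ell$: since all values share one parity, missing $\ell$ would force a single step of size at least $4$, contradicting (\ref{eq:par}). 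That is exactly the right justification and it is genuinely needed, since steps of size $2$ alone would not prevent skipping without the parity observation. One small point worth adding if you polish this up: the lemma's proof is later used constructively (the proof of Theorem~\ref{thm:2part} appeals to it to \emph{find} realizations with prescribed $\epsilon$-values in polynomial time), so it matters that your swap sequence is efficiently computable and of polynomial length; the Havel--Hakimi/Petersen construction does provide this (at most on the order of $|E|$ swaps), so your more elementary route loses nothing for the downstream algorithmic application, but you should say so explicitly rather than leave it to the reader.
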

\begin{proof}
By Theorem \ref{th:kiraly} there is a regular swap sequence turning $G$ into $G'$. By equation~(\ref{eq:par}), the realizations in this sequence either all have an even number of crossing edges or all have an odd number of crossing edges. Moreover, this sequence hits a realization $G''$ with $\epsilon(G'')=\ell$ for any $\epsilon(G) \leq \ell \leq \epsilon(G')$ with $\ell=\epsilon(G) \pmod{2}$.
\end{proof}

Denote by $\epsilon_m(\bd)$ the {\em minimum} value of $\ell$ such that $\GG_\ell$ is not empty, and similarly denote by $\epsilon_M(\bd)$ the {\em maximum} value. By Lemma~\ref{th:parity} the sets
\begin{equation}\label{eq:non-empty}
\GG_{\epsilon_m(\bd)}(\bd), \GG_{\epsilon_m(\bd)+2}(\bd), \ldots, \GG_{\epsilon_M(\bd)}(\bd)
\end{equation}
of all weakly consistent realizations are not empty while all other sets $\GG_\ell(\bd)$ are empty.

Edmonds' blossom algorithm \cite{E1965b} applied to the Tutte gadget $\mathbb{T} (K_n,\bd)$ can easily find the minimum and maximum values $\epsilon_m(\bd)$ and $\epsilon_M(\bd)$ together with the corresponding realizations consistent with $\cS(\epsilon_m(\bd))$ and $\cS(\epsilon_M(\bd))$, respectively.

We first assign weight $0$ to all crossing edges in $\mathbb{T}(K_n,\bd)$, while to all other edges we assign weight $1$. Then any maximum weight perfect matching corresponds to a realization $G$ which contains the minimum possible number of crossing edges. Therefore $\epsilon_m(\bd) = \epsilon(G)$. To determine $\epsilon_M$ we  just reverse the edge weights. So the solution of the maximum weight matching problem for this graph provides the value $\epsilon_M(\bd)$ and a realization $G\in \GG_{\epsilon_M(\bd)}$.

The formula (\ref{eq:non-empty}) gives all other possible $\epsilon$ values while the proof of Lemma \ref{th:parity} (ii) describes the way to find realizations with  specific $\epsilon$-values. \qed

\medskip\noindent This proof seems to be easy and straightforward. However, the situation is more complicated. First of all, as far as the authors are aware, this is the very first solved degree sequence type problem without some direct, greedy type solution. Secondly, it is somewhat unusual to solve an existence problem, where the two extremal solutions can be found directly, while the solutions inbetween can be inferred only indirectly from the extremal solutions.
Finally, we think that it is a small miracle that this proof works at all. Consider the following, slightly different question: the vertices, equipped by a degree sequence $\bd$, are partitioned into three classes: $U, W, Z.$ We are looking for a realization $G$ of $\bd$ which has exactly $k$ edges between $U$ and $W.$ There is no any other restriction. (Of course this problem does not belong to the skeleton graph problem class, but the difference is tiny.) For this problem, Lemma \ref{th:parity} does not hold. Therefore, finding realizations with minimum $m$ and maximum $M$ number of crossing edges does not help to solve the problem if $m < k < M$. Actually, this problem seems to be quite hard, and the authors were not able to provide a polynomial-time algorithm solving it despite of serious efforts.
\subsection{Connectivity}
\noindent
Now consider two realizations $G,G'$ of degree sequence~$\bd,$ consistent with the skeleton graph~$\cS(k)=(\Pi^2;\cB,w)$. Is it true that in such a situation there always exists a sequence of $\cS$-preserving swaps that turn~$G$ into~$G'$? The following counter example shows that the answer to the above question is ``no''.

\begin{theorem}\label{thm:counter}
There exist graphs~$G,G'$ that have the same vertex set~$V$, the same degree sequence~$\bd$ and that are both consistent with the skeleton graph $\cS(k)=(\Pi^2; \cB,w)$ described above, such that there exists no sequence of $\cS$-preserving swaps that turn~$G$ into~$G'$.
\end{theorem}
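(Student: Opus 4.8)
The plan is to prove this existence statement by exhibiting an explicit pair $G,G'$, so the whole proof is a construction together with a verification that no $\cS$-preserving swap sequence joins the two graphs. I would design the example so that the symmetric difference $\nabla = E(G)\triangle E(G')$ is a disjoint union of two alternating $4$-cycles. By Lemma~\ref{th:1} any such $\nabla$ automatically leaves every degree unchanged, so $G$ and $G'$ will share the degree sequence $\bd$ for free; the only thing I must arrange by hand is that they also share the crossing number $k$. The key idea is to make the two $4$-cycles pull the crossing number in opposite directions: concretely, take $U=\{x_1,x_2,y_1,y_2\}$, $W=\{x_3,x_4,y_3,y_4\}$, let $G$ contain the edges $x_1x_2,\ x_3x_4,\ y_1y_3,\ y_2y_4$ and let $G'$ contain $x_2x_3,\ x_1x_4,\ y_1y_2,\ y_3y_4$ (plus, in both, a common set of ``padding'' edges fixed below). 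The $x$-cycle converts the two internal edges $x_1x_2,x_3x_4$ into the two crossing edges $x_2x_3,x_1x_4$, raising $\epsilon$ by $2$, while the $y$-cycle converts the two crossing edges $y_1y_3,y_2y_4$ into internal edges, lowering $\epsilon$ by $2$. Hence $\epsilon(G)=\epsilon(G')$, and both graphs are consistent with the same $\cS(k)$.

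This design also explains, before any rigidity check, why single swaps should fail and why the abstract's \emph{double swap} is the natural cure: resolving either $4$-cycle by one ordinary swap changes the crossing number by $\pm 2$, so by the criterion that a swap $ab,cd\Rightarrow bc,ad$ is $\cS$-preserving exactly when $a,c$ or $b,d$ lie in a common class, neither resolution is $\cS$-preserving on its own; only performing both at once (a double swap) keeps $\epsilon$ fixed. The danger, and the reason the theorem needs a real proof rather than this heuristic, is that $G$ and $G'$ might still be joined by a long $\cS$-preserving \emph{detour} through realizations outside $\nabla$. Indeed, on the bare $8$-vertex skeleton above (no padding, all degrees small) such detours genuinely exist, e.g.\ a swap like $x_1x_2,\,y_1y_3\Rightarrow x_2y_1,\,x_1y_3$ is $\cS$-preserving and already moves ``crossing-ness'' onto the $x$-vertices. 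So the unpadded graph is almost certainly connected, and the substance of the proof is to choke off every such detour.

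My approach to this is to choose the common ``padding'' edges (dense cliques inside $U$ and inside $W$, together with a nested / chain-type crossing structure) so that $G$, and likewise $G'$, becomes an \emph{isolated vertex} of the $\cS$-preserving flip graph: every candidate $\cS$-preserving swap is to be blocked either because it alters $\epsilon$ (hence is not $\cS$-preserving) or because the pair of edges it would introduce is already present (hence it is not even a valid swap). Once rigidity holds, the conclusion is immediate, since $G\neq G'$ and the $\cS$-preserving class of $G$ is the singleton $\{G\}$; note this sidesteps Theorem~\ref{th:kiraly} and Lemma~\ref{th:parity}, which speak only of ordinary and crossing-count-monotone swaps and do not respect $\cS$-preservation. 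The hard part will be precisely this rigidity verification: I must tune the padding so that it simultaneously destroys all internal swaps (making $G[U]$ and $G[W]$ swap-free threshold graphs), all purely crossing swaps (making the bipartite part a chain graph, i.e.\ with nested neighbourhoods and no induced alternating $4$-cycle), \emph{and} all mixed internal/crossing swaps, while being careful that adding edges to block one swap does not open up another. This is a finite but delicate case analysis over the (constantly many) pairs of edges of $G$, and I expect it to be where essentially all the work of the proof lies.
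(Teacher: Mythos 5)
Your overall strategy --- exhibit an explicit pair and rule out every $\cS$-preserving sequence joining them --- has the right shape, and you correctly identify the crux of the theorem: ruling out a single swap is easy, but the real obstacle is detours through \emph{other} consistent realizations. However, your proposal stops exactly where the proof has to start. The statement is an existence claim whose entire content is a verified example, and you never produce one: the padding is not written down, and the rigidity verification (that $G$ is an isolated vertex of the $\cS$-preserving flip graph) is announced but not performed --- as you yourself say, that case analysis is ``where essentially all the work of the proof lies.'' There is also a structural tension you should confront before trusting the blueprint: you must block every $\epsilon$-preserving swap at $G$ --- internal ones (threshold condition inside $G[U]$ and $G[W]$), purely crossing ones (chain condition on the bipartite part), and the mixed three-one swaps (an alignment condition tying the internal neighborhood order to the crossing neighborhood order) --- while the eight special vertices retain the four prescribed non-edges $x_2x_3$, $x_1x_4$, $y_1y_2$, $y_3y_4$. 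You cannot block a swap by adding one of these pairs as a padding edge, since they must be $G'$-edges absent from $G$; so near those eight vertices the blocking must come entirely from the structural conditions, and until a concrete padding is exhibited and checked it remains open whether every choice leaves some $\epsilon$-preserving mixed swap alive.

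For comparison, the paper sidesteps rigidity engineering altogether by shrinking the whole space: with $\bd=(6,6,3,3,3,3,1,1)$, $|U|=|W|=4$ and $k=7$, the two degree-$6$ vertices and the two degree-$1$ vertices force so much structure that there are exactly three consistent realizations in total (Figure~\ref{fig:counter}). One then checks directly that no single $\cS$-preserving swap joins $G_3$ to $G_1$ or $G_2$; since an $\cS$-preserving sequence moves only through consistent realizations, detours are impossible for lack of anywhere to detour to. The paper's $G_3$ is in fact precisely the isolated flip-graph vertex you are trying to manufacture, obtained by brute smallness rather than by padding. If you want to salvage your two-$4$-cycle design, the quickest repair is the same move: choose degrees extreme enough that the consistent realization space can be enumerated by hand, enumerate it, and check the (constantly many) candidate $\cS$-preserving swaps --- that enumeration is the proof, and nothing short of it closes the detour gap.
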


\begin{proof}
We construct an example as follows. Let $\bd=(6,6,3,3,3,3,1,1)$ be the degrees of vertices~$(u_3,w_7,u_2,w_6,u_1,w_5,u_0,w_4)$. Let $\cS$ be the skeleton graph with $\Pi=\{U,W\}$ with $U=\{u_0,u_1,u_2,u_3\}, W=\{w_4,w_5,w_6,w_7\}$ and with bone $UW$ with weight $k=7$. Then both vertex classes must contain 3 edges. In Figure \ref{fig:counter} we show all realizations of this degree sequence that are consistent with~$\cS$. There is a single $\cS$-preserving swap that turns $G_1$ into $G_2$, namely the swap $u_2w_5,u_1w_6\Rightarrow u_1w_5,u_2w_6$. However, there is no $\cS$-preserving swap that turns~$G_1$ into~$G_3$ or~$G_2$ into~$G_3$. (See  Figure \ref{fig:counter1} where the red and blue chords can be found in only one realization, while the black chords are in both.) Hence, there is no sequence of $\cS$-preserving swaps that turns~$G_1$ or~$G_2$ into~$G_3$.
\begin{figure}[h]
\begin{minipage}[b]{.33\linewidth}
\centering
\begin{tikzpicture}[scale=.8]
\draw[color=black,fill=black] (0,0) node (u3) [label=below:{\phantom{$u_3$}}] {} circle (2pt);
\draw[color=black,fill=black] (-.5,1) node (u2) {} circle (2pt);
\draw[color=black,fill=black] (-.5,2) node (u1) {} circle (2pt);
\draw[color=black,fill=black] (0,3) node (u0) {} circle (2pt);
\draw[color=black,fill=black] (2,0) node (u7) {} circle (2pt);
\draw[color=black,fill=black] (2.5,1) node (u6) {} circle (2pt);
\draw[color=black,fill=black] (2.5,2) node (u5) {} circle (2pt);
\draw[color=black,fill=black] (2,3) node (u4) {} circle (2pt);
\draw [line width=.5pt] (u3) -- (u2);
\draw [line width=.5pt] (u3) -- (u1);
\draw [line width=.5pt] (u3) -- (u0);
\draw [line width=.5pt] (u3) -- (u5);
\draw [line width=.5pt] (u3) -- (u6);
\draw [line width=.5pt] (u3) -- (u7);
\draw [line width=.5pt] (u7) -- (u2);
\draw [line width=.5pt] (u7) -- (u1);
\draw [line width=.5pt] (u7) -- (u4);
\draw [line width=.5pt] (u7) -- (u5);
\draw [line width=.5pt] (u7) -- (u6);
\draw [line width=.5pt] (u1) -- (u6);
\draw [line width=.5pt] (u2) -- (u5);
\draw [dashed] (1,-.5) -- (1,3.5);
\draw (0,4) node {$U$};
\draw (2,4) node {$W$};
\end{tikzpicture}
\subcaption{$G_1$}\label{fig:c1a}
\end{minipage}%
\begin{minipage}[b]{.33\linewidth}
\centering
\begin{tikzpicture}[scale=.8]
\draw[color=black,fill=black] (0,0) node (u3) [label=below:$u_3$] {} circle (2pt);
\draw[color=black,fill=black] (-.5,1) node (u2) [label=left:$u_2$] {} circle (2pt);
\draw[color=black,fill=black] (-.5,2) node (u1) [label=left:$u_1$] {} circle (2pt);
\draw[color=black,fill=black] (0,3) node (u0) [label=above:$u_0$] {} circle (2pt);
\draw[color=black,fill=black] (2,0) node (u7) [label=below:$w_7$] {} circle (2pt);
\draw[color=black,fill=black] (2.5,1) node (u6) [label=right:$w_6$] {} circle (2pt);
\draw[color=black,fill=black] (2.5,2) node (u5) [label=right:$w_5$] {} circle (2pt);
\draw[color=black,fill=black] (2,3) node (u4) [label=above:$w_4$] {} circle (2pt);
\draw [line width=.5pt] (u3) -- (u2);
\draw [line width=.5pt] (u3) -- (u1);
\draw [line width=.5pt] (u3) -- (u0);
\draw [line width=.5pt] (u3) -- (u5);
\draw [line width=.5pt] (u3) -- (u6);
\draw [line width=.5pt] (u3) -- (u7);
\draw [line width=.5pt] (u7) -- (u2);
\draw [line width=.5pt] (u7) -- (u1);
\draw [line width=.5pt] (u7) -- (u4);
\draw [line width=.5pt] (u7) -- (u5);
\draw [line width=.5pt] (u7) -- (u6);
\draw [line width=.5pt] (u1) -- (u5);
\draw [line width=.5pt] (u2) -- (u6);
\draw [dashed] (1,-.5) -- (1,3.5);
\end{tikzpicture}
\subcaption{$G_2$}\label{fig:c1b}
\end{minipage}%
\begin{minipage}[b]{.33\linewidth}
\centering
\begin{tikzpicture}[scale=.8]
\draw[color=black,fill=black] (0,0) node (u3) [label=below:{\phantom{$u_3$}}] {} circle (2pt);
\draw[color=black,fill=black] (-.5,1) node (u2) {} circle (2pt);
\draw[color=black,fill=black] (-.5,2) node (u1) {} circle (2pt);
\draw[color=black,fill=black] (0,3) node (u0) {} circle (2pt);
\draw[color=black,fill=black] (2,0) node (u7) {} circle (2pt);
\draw[color=black,fill=black] (2.5,1) node (u6) {} circle (2pt);
\draw[color=black,fill=black] (2.5,2) node (u5) {} circle (2pt);
\draw[color=black,fill=black] (2,3) node (u4) {} circle (2pt);
\draw [line width=.5pt] (u3) -- (u2);
\draw [line width=.5pt] (u3) -- (u1);
\draw [line width=.5pt] (u3) -- (u4);
\draw [line width=.5pt] (u3) -- (u5);
\draw [line width=.5pt] (u3) -- (u6);
\draw [line width=.5pt] (u3) -- (u7);
\draw [line width=.5pt] (u7) -- (u2);
\draw [line width=.5pt] (u7) -- (u1);
\draw [line width=.5pt] (u7) -- (u0);
\draw [line width=.5pt] (u7) -- (u5);
\draw [line width=.5pt] (u7) -- (u6);
\draw [line width=.5pt] (u1) -- (u2);
\draw [line width=.5pt] (u6) -- (u5);
\draw [dashed] (1,-.5) -- (1,3.5);
\draw (0,4) node {$U$};
\draw (2,4) node {$W$};
\end{tikzpicture}
\subcaption{$G_3$}\label{fig:c1c}
\end{minipage}%
\caption{There is no sequence of $\cS$-preserving swaps that turns~$G_1$ or~$G_2$ into~$G_3$.
}
\label{fig:counter}
\end{figure}
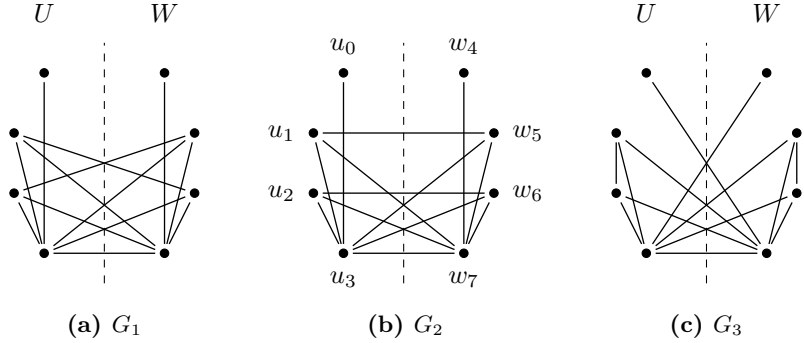
\end{proof}

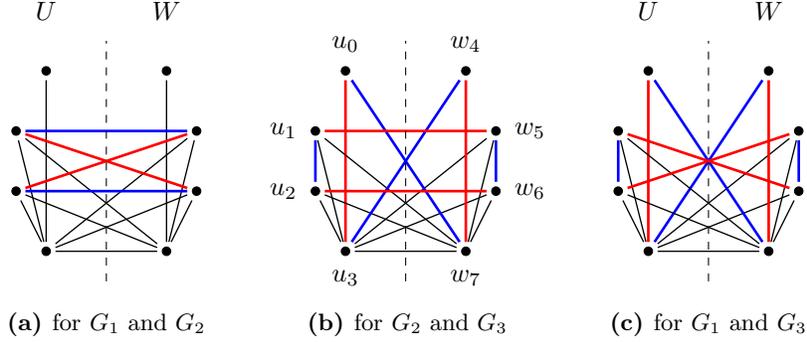
\begin{figure}[h]
\begin{minipage}[b]{.33\linewidth}
\centering
\begin{tikzpicture}[scale=.8]
\draw[color=black,fill=black] (0,0) node (u3) [label=below:{\phantom{$u_3$}}] {} circle (2pt);
\draw[color=black,fill=black] (-.5,1) node (u2) {} circle (2pt);
\draw[color=black,fill=black] (-.5,2) node (u1) {} circle (2pt);
\draw[color=black,fill=black] (0,3) node (u0) {} circle (2pt);
\draw[color=black,fill=black] (2,0) node (u7) {} circle (2pt);
\draw[color=black,fill=black] (2.5,1) node (u6) {} circle (2pt);
\draw[color=black,fill=black] (2.5,2) node (u5) {} circle (2pt);
\draw[color=black,fill=black] (2,3) node (u4) {} circle (2pt);
\draw [line width=.5pt] (u3) -- (u2);
\draw [line width=.5pt] (u3) -- (u1);
\draw [line width=.5pt] (u3) -- (u0);
\draw [line width=.5pt] (u3) -- (u5);
\draw [line width=.5pt] (u3) -- (u6);
\draw [line width=.5pt] (u3) -- (u7);
\draw [line width=.5pt] (u7) -- (u2);
\draw [line width=.5pt] (u7) -- (u1);
\draw [line width=.5pt] (u7) -- (u4);
\draw [line width=.5pt] (u7) -- (u5);
\draw [line width=.5pt] (u7) -- (u6);
\draw [line width=1pt,red] (u1) -- (u6);
\draw [line width=1pt,red] (u2) -- (u5);
\draw [line width=1pt,blue] (u1) -- (u5);
\draw [line width=1pt,blue] (u2) -- (u6);
\draw [dashed] (1,-.5) -- (1,3.5);
\draw [line width=.5pt] (0,4) node {$U$};
\draw [line width=.5pt] (2,4) node {$W$};
\end{tikzpicture}
\subcaption{for $G_1$ and $G_2$}\label{fig:c2a}
\end{minipage}%
\begin{minipage}[b]{.33\linewidth}
\centering
\begin{tikzpicture}[scale=.8]
\draw[color=black,fill=black] (0,0) node (u3) [label=below:$u_3$] {} circle (2pt);
\draw[color=black,fill=black] (-.5,1) node (u2) [label=left:$u_2$] {} circle (2pt);
\draw[color=black,fill=black] (-.5,2) node (u1) [label=left:$u_1$] {} circle (2pt);
\draw[color=black,fill=black] (0,3) node (u0) [label=above:$u_0$] {} circle (2pt);
\draw[color=black,fill=black] (2,0) node (u7) [label=below:$w_7$] {} circle (2pt);
\draw[color=black,fill=black] (2.5,1) node (u6) [label=right:$w_6$] {} circle (2pt);
\draw[color=black,fill=black] (2.5,2) node (u5) [label=right:$w_5$] {} circle (2pt);
\draw[color=black,fill=black] (2,3) node (u4) [label=above:$w_4$] {} circle (2pt);
\draw [line width=.5pt] (u3) -- (u2);
\draw [line width=.5pt] (u3) -- (u1);
\draw [line width=1pt,red] (u3) -- (u0);
\draw [line width=.5pt] (u3) -- (u5);
\draw [line width=.5pt] (u3) -- (u6);
\draw [line width=.5pt] (u3) -- (u7);
\draw [line width=.5pt] (u7) -- (u2);
\draw [line width=.5pt] (u7) -- (u1);
\draw [line width=1pt,red] (u7) -- (u4);
\draw [line width=1pt,blue] (u0) -- (u7);
\draw [line width=1pt,blue] (u3) -- (u4);
\draw [line width=.5pt] (u7) -- (u5);
\draw [line width=.5pt] (u7) -- (u6);
\draw [line width=1pt,red] (u1) -- (u5);
\draw [line width=1pt,red] (u2) -- (u6);
\draw [line width=1pt,blue] (u1) -- (u2);
\draw [line width=1pt,blue] (u6) -- (u5);
\draw [dashed] (1,-.5) -- (1,3.5);
\end{tikzpicture}
\subcaption{for $G_2$ and $G_3$}\label{fig:c2b}
\end{minipage}%
\begin{minipage}[b]{.33\linewidth}
\centering
\begin{tikzpicture}[scale=.8]
\draw[color=black,fill=black] (0,0) node (u3) [label=below:{\phantom{$u_3$}}] {} circle (2pt);
\draw[color=black,fill=black] (-.5,1) node (u2) {} circle (2pt);
\draw[color=black,fill=black] (-.5,2) node (u1) {} circle (2pt);
\draw[color=black,fill=black] (0,3) node (u0) {} circle (2pt);
\draw[color=black,fill=black] (2,0) node (u7) {} circle (2pt);
\draw[color=black,fill=black] (2.5,1) node (u6) {} circle (2pt);
\draw[color=black,fill=black] (2.5,2) node (u5) {} circle (2pt);
\draw[color=black,fill=black] (2,3) node (u4) {} circle (2pt);
\draw [line width=.5pt] (u3) -- (u2);
\draw [line width=.5pt] (u3) -- (u1);
\draw [line width=1pt,blue] (u3) -- (u4);
\draw [line width=1pt,red] (u3) -- (u0);
\draw [line width=.5pt] (u3) -- (u5);
\draw [line width=.5pt] (u3) -- (u6);
\draw [line width=.5pt] (u3) -- (u7);
\draw [line width=.5pt] (u7) -- (u2);
\draw [line width=.5pt] (u7) -- (u1);
\draw [line width=1pt,blue] (u7) -- (u0);
\draw [line width=1pt,red] (u7) -- (u4);
\draw [line width=.5pt] (u7) -- (u5);
\draw [line width=.5pt] (u7) -- (u6);
\draw [line width=1pt,blue] (u1) -- (u2);
\draw [line width=1pt,blue] (u6) -- (u5);
\draw [line width=1pt,red] (u5) -- (u2);
\draw [line width=1pt,red] (u1) -- (u6);
\draw [dashed] (1,-.5) -- (1,3.5);
\draw (0,4) node {$U$};
\draw (2,4) node {$W$};
\end{tikzpicture}
\subcaption{for $G_1$ and $G_3$}\label{fig:c2c}
\end{minipage}%
\caption{The red and blue chords come from one realization, while the black ones come from both. The realizations themselves come from Figure~\ref{fig:counter}.}
\label{fig:counter1}
\end{figure}
\noindent Motivated by Theorem~\ref{thm:counter}, we define a {\bf double swap} as the simultaneous application of two disjoint swaps, i.e., if $a,b,c,d,e,f,g,h$ are eight distinct vertices of graph~$G=(V,E)$ such that~$ab,cd,ef,gh\in E$ and $bc,ad,fg,eh\notin E$, then the result of applying the double swap $ab,cd,ef,gh \Rightarrow bc,ad,fg,eh$ is the graph $G'=(V,E\cup\{ ab,cd,ef,gh\}\setminus\{bc,ad,fg,eh\})$. A double swap is $\cS$-preserving, for some skeleton graph~$\cS$, if~$G'$ is consistent with~$\cS$.

\begin{theorem}\label{thm:doubleswaps}
For every two graphs~$G,G'$ that have the same vertex set~$V$, the same degree sequence~$\bd$ and that are both consistent with the same skeleton graph~$\cS(k)=(\Pi^2;\cB,w)$, there exists a sequence of $\cS$-preserving swaps and double swaps that turn~$G$ into~$G'$.
\end{theorem}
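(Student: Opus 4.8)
The plan is to reduce the statement to the much easier situation in which $G$ and $G'$ induce the \emph{same crossing degrees}, and to use double swaps only to repair a mismatch between these degrees. For a realization $H$ consistent with $\cS(k)$ write $c_H(v)$ for the number of crossing edges incident to $v$, so that $\sum_{v\in U}c_H(v)=\sum_{v\in W}c_H(v)=k$ and the internal degree of $v$ equals $d(v)-c_H(v)$. My first claim is that if $c_G\equiv c_{G'}$ then $G$ and $G'$ are connected by $\cS$-preserving \emph{single} swaps alone. Indeed, under this assumption the bipartite graphs $G[U,W]$ and $G'[U,W]$ have identical degree sequences (namely $c_G$), so by classical bipartite swap connectivity (Ryser~\cite{ryser}) one can transform one into the other by swaps $u_1w_1,u_2w_2\Rightarrow u_1w_2,u_2w_1$ with $u_i\in U$, $w_i\in W$; each keeps all four edges crossing, hence is $\cS$-preserving, and leaves the interiors untouched. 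Once the crossing parts agree, $G[U]$ and $G'[U]$ have the common internal degree sequence $d|_U-c_{G'}|_U$ and are connected by ordinary swaps inside $U$ (both endpoints in $U$, hence $\cS$-preserving, by Petersen~\cite{pet}); the same applies to $W$.

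By this first step it suffices to transform $G$, using $\cS$-preserving swaps and double swaps, into some realization with crossing degrees $c_{G'}$. I would set $D(H)=\sum_{v}|c_H(v)-c_{G'}(v)|$ and induct on $D(G)$, the base case $D(G)=0$ being handled above. Two elementary operations change $c$ while preserving $k$. A single $\cS$-preserving swap $ab,cd\Rightarrow bc,ad$ with $a,c,d\in U$, $b\in W$, $ab,cd\in E$ and $bc,ad\notin E$ transfers one unit of crossing degree from $a$ to $c$ (with a symmetric move inside $W$). A double swap pairs an ``up-swap'' $ab,cd\Rightarrow ad,bc$ (with $a,b\in U$, $c,d\in W$, raising $\epsilon$ by $2$ and raising $c$ by one on each of $a,b,c,d$) with a vertex-disjoint ``down-swap'' (lowering $\epsilon$ by $2$ and $c$ by one on two vertices of $U$ and two of $W$); the combined operation preserves $\epsilon=k$, hence is $\cS$-preserving, and it redistributes crossing degree by $+1$ on two $U$- and two $W$-vertices and by $-1$ on two $U$- and two $W$-vertices.

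To locate a $D$-decreasing move I would run a regular swap sequence from $G$ to $G'$ (Theorem~\ref{th:kiraly}) and read it through the crossing number $\epsilon$, which by~(\ref{eq:par}) changes by $0$ or $\pm2$ at each step and returns from $k$ to $k$. A flat ($0$) step is already an $\cS$-preserving swap, and, chosen on the symmetric difference $\nabla=E(G)\triangle E(G')$, it decreases $D$. The nonflat steps occur in equally many $+2$ and $-2$ kinds, and the intended goal is to execute them in $(+2,-2)$ pairs as $\cS$-preserving double swaps, each pair decreasing $D$; since consecutive swaps along a single alternating circuit overlap in vertices, the up- and down-swaps to be paired will generally have to be drawn from disjoint alternating pieces of $\nabla$.

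The delicate point — and the reason double swaps are genuinely needed (cf.\ Theorem~\ref{thm:counter}) — is to guarantee that a \emph{valid}, $D$-decreasing $\cS$-preserving operation always exists. A single transfer swap can be blocked because one of the edges it must create is already present, which is exactly what happens in the counterexample of Figure~\ref{fig:counter}; in that case one must instead pair an up-swap with a \emph{disjoint} down-swap whose required edges are all in the correct state, using eight distinct vertices. I expect the bulk of the proof to be a case analysis on $\nabla$ (equivalently, on the regular swap sequence) showing that whenever no single $\cS$-preserving transfer reduces $D$, an up-swap and a down-swap sitting on disjoint parts of $\nabla$ can be selected and combined into one $\cS$-preserving double swap that does. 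Once $D$ is driven to $0$, the first step finishes the argument.
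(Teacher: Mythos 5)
Your first reduction is correct and is genuinely different from anything in the paper: if the crossing-degree functions satisfy $c_G\equiv c_{G'}$, then Ryser-type swaps transform $G[U,W]$ into $G'[U,W]$ while keeping all four touched pairs crossing, and Petersen-type swaps inside $U$ and inside $W$ finish the job; every intermediate graph has exactly $k$ crossing edges and the correct degrees, hence stays consistent with $\cS(k)$. The paper never isolates the function $c_H$; it inducts directly on $r(G,G')$, so this phase is a clean observation. But it only disposes of the easy case, and your Phase~2 --- the induction on $D(G)=\sum_v|c_G(v)-c_{G'}(v)|$ --- is where the theorem actually lives, and there it is not proved: you explicitly defer ``the bulk of the proof'' to an unspecified case analysis, and the mechanisms you do sketch fail on concrete points. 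First, the assertion that a flat ($\epsilon$-preserving) step of a regular swap sequence, chosen on $\nabla$, decreases $D$ is false: a flat swap with all four vertices in one class, or with all four pairs crossing, changes no crossing degree and leaves $D$ unchanged, and a three-in-one-class transfer can move crossing degree in the wrong direction and \emph{increase} $D$. Moreover, swaps taken from the middle of a regular sequence are swaps of the \emph{evolving} realization, not of the current consistent graph, so their edge/non-edge preconditions need not hold when you execute them out of order or try to merge a $+2$ step at time $t$ with a $-2$ step at time $t'>t+1$ into one double swap.

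Second, and more fundamentally, your pairing plan needs an up-swap and a down-swap on eight distinct vertices drawn ``from disjoint alternating pieces of $\nabla$,'' but $\nabla$ can consist of a \emph{single} alternating circuit containing both a red and a blue crossing chord, in which case there are no disjoint pieces and the $\pm2$ steps of the regular sequence overlap in vertices. This is precisely the configuration the paper's Lemmas~\ref{clm:no_touching_red_and_blue} and~\ref{clm:no_mixed_circuit} are built to handle: when a component of $\nabla$ contains crossing chords of both colors, the paper connects them by a red-blue alternating trail, closes it through the pair $uy$ (which, after Lemma~\ref{clm:no_touching_red_and_blue}, must be black or white) into an auxiliary alternating circuit, and manufactures an intermediate consistent realization $G^*$ with both $r(G,G^*)<r(G,G')$ and $r(G^*,G')<r(G,G')$, applying the induction from both sides --- no double swap is used there at all. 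Only after components mixing both colors are excluded does the paper run the two regular sequences on vertex-disjoint circuits $C_1,C_2$ until each is blocked and fuse the two blocked swaps into one $\cS$-preserving double swap. Your intuition about disjoint pairing matches exactly that final step, but without an analogue of the two lemmas (or an independent existence proof that from any consistent $G$ with $D(G)>0$ some $D$-decreasing $\cS$-preserving swap or double swap exists, possibly using chords outside $\nabla$ --- a statement you conjecture but do not establish, and which by Theorem~\ref{thm:counter} cannot be reduced to single swaps), your induction step can get stuck. As written, the proposal has a genuine gap at its central claim.
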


\noindent {\bf Proof of Theorem \ref{thm:doubleswaps}}.
The proof is by induction on~$r(G,G')$. Let's recall that this is the number of red edges in $G\triangle G'$ which is $\frac{1}{2}|E(G) \triangle E(G')|.$ If $r(G,G')=0$ then~$G=G'$ and we are done. Moreover, if $r(G,G')=2$ then we can turn~$G$ into~$G'$ by applying the single swap consisting of the red and blue edges of~$\nabla$ and we are again done. (It is easy to see that $r(G,G')$ cannot be 1.)  Hence, assume $r(G,G')\geq 3$. In a sequence of lemmas we will show that we can always find an $\cS$-preserving swap and/or double swap sequence which reduces $r(G,G')$. In each lemma we identify an alternating circuit in $\nabla$ s.t. the regular swap subsequence processing this circuit will apply only $\cS$-preserving swaps, and will hence satisfy the conditions described in Theorem~\ref{thm:doubleswaps}.
\begin{lemma}
If one can find an alternating circuit  $C\subset \nabla$ completely within a vertex class, say, $U$, then the regular swap subsequence processing $C$ (see Theorem \ref{th:kiraly}) provides $\cS$-preserving swaps at each step.
\end{lemma}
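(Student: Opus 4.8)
The plan is to reduce everything to the structural guarantee already supplied by Theorem~\ref{th:kiraly}, specifically property~(i) of Lemma~\ref{th:2}: the regular swap subsequence that processes a single alternating circuit $C$ only ever operates on vertices of $V(C)$. Once that localization is in hand, the conclusion is almost immediate, because a swap whose four vertices all lie in one class $U$ cannot touch any crossing edge, and hence cannot change the crossing-edge count that consistency with $\cS(k)$ depends on.

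First I would note that the hypothesis "$C$ lies completely within the vertex class $U$" means precisely $V(C)\subseteq U$. By Lemma~\ref{th:2}(i), every swap in the regular swap subsequence processing $C$ is applied to four vertices belonging entirely to $V(C)$, and therefore to four vertices all lying in $U$. Consequently each such swap only deletes and inserts pairs internal to $U$, i.e.\ non-crossing edges; no $U\text{--}W$ edge is ever added or removed along the subsequence. (All such pairs inside $U$ are chords, since the loop at $U$ is present in $\cS(k)$, so each step is a legitimate restricted swap and every intermediate realization is weakly consistent.)

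Next I would promote this to the invariance of the number of crossing edges. Each single swap in the subsequence removes two edges inside $U$ and adds two edges inside $U$, so the crossing count $\epsilon$ is literally unchanged at every individual step; equivalently, since $a$ and $c$ lie in the common class $U$, the Observation preceding Section~3 gives directly that the swap is $\cS$-preserving. Because $G$ is consistent with $\cS(k)$ we have $\epsilon(G)=k$, so every realization produced along the subsequence still has exactly $k$ crossing edges and is therefore consistent with $\cS(k)$. This is exactly the assertion that each swap in the subsequence is $\cS$-preserving.

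There is no genuinely hard step here: the whole content is the localization statement of Lemma~\ref{th:2}(i), which confines the subsequence to $V(C)\subseteq U$. The only point that deserves a moment of care is that $\cS$-consistency must be verified at \emph{every} intermediate realization, not merely at the two endpoints of the subsequence; this is handled cleanly by observing that $\epsilon$ is preserved by each single swap rather than merely restored at the end, so consistency propagates step by step along the entire subsequence.
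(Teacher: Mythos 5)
Your proof is correct and follows essentially the same route as the paper: both arguments rest on the localization property of the regular swap sequence (Lemma~\ref{th:2}(i), together with Remark~\ref{th:extend}(i) in the paper's version) to confine every swap to $V(C)\subseteq U$, and then conclude that swaps entirely inside one class cannot disturb consistency with $\cS$. Your extra step-by-step verification that $\epsilon$ is preserved at each intermediate realization is a more explicit spelling-out of what the paper states in one line, but it is the same argument.
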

\begin{proof}
Indeed, the regular swap sequence can start with that particular circuit---see Lemma~\ref{th:extend}(i)---and the process uses chords of this circuit, therefore all swap operations happen inside class $U$ and hence all swaps are $\cS$-preserving.
\end{proof}
From now on we assume that there exists no alternating circuit completely within any vertex class.
To make the references easier for different type of chords in the realizations, we introduce two further notions: we say that a chord is {\bf black} if it is an edge in both $G$ and $G'$. It is {\bf white} if it is an edge neither in $G$ nor in $G'$.
\begin{lemma}\label{clm:no_touching_red_and_blue}
Assume that there exists a red crossing edge and a blue crossing edge that share an endpoint. Then we can identify an alternating circuit $C$ such that the regular swap subsequence which processes $C$ provides only $\cS$-preserving swaps.
\end{lemma}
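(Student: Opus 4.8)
The plan is to produce the required circuit by following the two given edges through their common endpoint and then processing the resulting circuit with a regular swap sequence anchored there. Write $x$ for the shared endpoint; without loss of generality $x\in U$, the red crossing edge is $xy_1$ and the blue crossing edge is $xy_2$, so that $y_1,y_2\in W$ with $xy_1\in E(G)\setminus E(G')$ and $xy_2\in E(G')\setminus E(G)$. Recall from the Observation characterising $\cS$-preserving swaps that a contraction $ab,cd\Rightarrow bc,ad$ of a red--blue--red segment preserves $\cS$ exactly when $a,c$ or $b,d$ lie in a common class, and that running the regular swap subsequence of Lemma~\ref{th:2} along an alternating circuit amounts to contracting such segments one at a time. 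A single bookkeeping remark drives everything: since both $G$ and $G'$ have exactly $k$ crossing edges, every $\cS$-preserving swap leaves the number of crossing edges unchanged, so an alternating circuit can be processed entirely by $\cS$-preserving swaps only if it is \emph{balanced}, i.e. contains equally many red and blue crossing edges. The pair $xy_1,xy_2$ is already balanced (one red, one blue crossing edge), and this is the feature I would exploit.

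First I would treat the clean case, in which the alternating path $y_1,x,y_2$ closes up into a circuit $C=(x,y_1,\dots,y_2,x)$ whose only two crossing edges are $xy_1$ and $xy_2$, all remaining edges lying inside $W$; such a $C$ exists whenever $\nabla$ contains a blue-first, red-last alternating $W$-path from $y_1$ to $y_2$. Anchoring the regular swap subsequence at $x$ (so $x=v_0$, $v_1=y_1$, $v_{2m-1}=y_2$), the $i$-th contraction is $v_0v_{2i-1},v_{2i}v_{2i+1}\Rightarrow v_{2i-1}v_{2i},v_0v_{2i+1}$; every intermediate vertex $v_1,\dots,v_{2m-1}$ lies in $W$, so $v_{2i-1}$ and $v_{2i+1}$ share the class $W$, and by the Observation each contraction is $\cS$-preserving, the closing one using $y_1,y_2\in W$. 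Thus processing $C$ reduces $r(G,G')$ by $\cS$-preserving swaps alone, which is exactly what the induction of Theorem~\ref{thm:doubleswaps} requires.

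The main obstacle is that in general no such inner $W$-path exists: the natural circuit through $x$ may be forced to make excursions into $U$, and an arbitrary circuit through $xy_1,xy_2$ need not be balanced, in which case its processing would change the crossing count and so cannot be all $\cS$-preserving. To overcome this I would establish two facts. First, \emph{every balanced alternating circuit that does not lie inside a single class admits an $\cS$-preserving red--blue--red contraction, and contracting preserves balance}; the existence of the contraction is a short parity argument, since a circuit with no $\cS$-preserving contraction would force the even-indexed and the odd-indexed vertices each to alternate in class around their cyclic orders, which makes all crossing edges the same colour and contradicts balance (and when the circuit length is $\equiv 2\pmod 4$ the odd-length cyclic order cannot alternate at all). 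Iterating this shows a balanced circuit is fully processable by $\cS$-preserving swaps. Second, \emph{one can build a balanced alternating circuit through $x$ using $xy_1$ and $xy_2$}: seed it with the already-balanced pair $xy_1,xy_2$ and complete the walk from $y_1$ to $y_2$, using the global balance of $\nabla$ (equally many red and blue crossing edges overall) to absorb any surplus of one crossing colour against an oppositely imbalanced part of $\nabla$ by an Eulerian-type exchange between circuits. I expect this second step---guaranteeing a balanced circuit through the \emph{prescribed} pair, rather than merely a balanced circuit somewhere in $\nabla$---to be the delicate point; here the standing assumption that no alternating circuit lies inside a single class keeps the excursions from degenerating and rules out the bad extremal configurations.
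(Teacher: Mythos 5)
Your reduction of the lemma to the two facts (a) and (b) does not go through, and the decisive failure is exactly the point you flag as delicate: fact (b) is false. A balanced alternating circuit through the prescribed pair $xy_1,xy_2$ need not exist, because the oppositely imbalanced portion of $\nabla$ can lie in a \emph{different connected component}, and an alternating circuit can never leave a component, so no ``Eulerian-type exchange'' can reach it. Concretely, take $x,z_1,z_2,p,s\in U$ and $y_1,y_2,z_3,q,r\in W$, and let $\nabla$ consist of two vertex-disjoint circuits: the $6$-circuit $xy_1$ (red, crossing), $y_1z_1$ (blue, crossing), $z_1z_2$ (red, inside $U$), $z_2z_3$ (blue, crossing), $z_3y_2$ (red, inside $W$), $y_2x$ (blue, crossing), and the $4$-circuit $pq$ (red, crossing), $qr$ (blue, inside $W$), $rs$ (red, crossing), $sp$ (blue, inside $U$). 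Every vertex has equal red and blue degree, globally there are $3$ red and $3$ blue crossing chords (so $\epsilon(G)=\epsilon(G')$, and both realizations can be made consistent with the same $\cS(k)$), and the hypothesis of the lemma holds at $x$. Yet the component of $x$ supports only one alternating circuit through the pair, namely the $6$-circuit itself, which carries $1$ red and $3$ blue crossing chords: it is unbalanced, so by your own invariant it cannot be processed by $\cS$-preserving swaps, and no balanced circuit through the prescribed pair exists anywhere in $\nabla$. Your strategy is confined to the edges of the symmetric difference, and that confinement is fatal here. (A secondary gap: even fact (a) is unproven as stated, since your parity argument only locates a red--blue--red segment $(a,b,c,d)$ satisfying the class condition, but the contracting swap additionally needs $ad\notin E(G)$; when $ad$ is red or black the move is blocked, and Lemma~\ref{th:2} does not guarantee that its swaps have your anchored form with both odd-indexed endpoints in $W$.)

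The paper's proof avoids all of this by working \emph{outside} $\nabla$. With the shared endpoint $v\in W$ and $u,x\in U$ (the mirror of your labelling), it compares red and blue degrees at the two same-class endpoints $u$ and $x$: since every vertex of $\nabla$ has equal red and blue degree (Lemma~\ref{th:1}), there must exist a vertex $a$ with $au$ blue and $ax$ not blue, or a vertex $b$ with $bx$ red and $bu$ not red. It then performs a \emph{single} swap on the $4$-cycle $u,v,x,a$ that uses the chord $ax$, which may be black or even white, i.e., not an edge of the symmetric difference at all; if $ax$ is red or black the swap is applied to $G$, and if $ax$ is white the corresponding swap is applied to $G'$ (invoking Remark~\ref{th:extend}(ii)). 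In either case the swap is $\cS$-preserving (the two $U$-endpoints guarantee the crossing count is unchanged) and strictly decreases the relevant red-edge count, so the induction driving Theorem~\ref{thm:doubleswaps} closes the argument. In the counterexample above this is exactly what saves the day: the black/white chord between $z_1$ and $y_2$-type pairs lets one step to an intermediate realization $G^*$ whose symmetric difference with $G$ is not contained in $\nabla$, a move your circuit-based framework cannot express. To repair your proposal you would have to abandon the restriction to circuits of $\nabla$ and introduce precisely this kind of degree-counting step through black and white chords.
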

\begin{figure}[h]
\centering
\begin{tikzpicture}[scale=1.2]
\draw[color=black,fill=black] (0,0) node (u) [label=above:$u$] {} circle (2pt);
\draw [color=black,fill=black] (-1,0) node (a) [label=above:$a$] {} circle (1pt);
\draw[color=black,fill=black] (0,-1) node (x) [label=below:$x$] {} circle (2pt);
\draw[color=black,fill=black] (1,-.5) node (v) [label=right:$v$] {}   circle (2pt);
\draw [color=black,fill=black] (-1,-1) node (b) [label=below:$b$] {} circle (1pt);
\draw [line width=1.5pt,red] (u) -- (v) node[pos=0.3,above,color=black] {$e_1$};
\draw [line width=1.5pt,blue] (x) -- (v) node[pos=0.3,below,color=black] {$e_2$};
\draw [line width=1pt,red] (x) -- (b);
\draw [line width=1pt,blue] (u) -- (a);
\draw [line width=.5pt,red,dashed] (u) -- (b);
\draw [line width=.5pt,blue,dashed] (x) -- (a);
\draw [dashed] (.55,-1.5) -- (.55,1);
\draw (0,1) node {$U$};
\draw (1,1) node {$W$};
\end{tikzpicture}
\caption{A red crossing chord and a blue crossing chord that share an endpoint. The red chord is an edge in $G$ but not in $G'$. Analogously the blue chord is an edge in $G'$ but not in $G.$ Either $au$ is blue but $ax$ is not blue, or $bx$ is red but $bu$ is not red.}
\label{fig:simp1}
\end{figure}
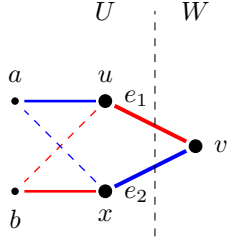

\begin{proof}
Suppose that $e_1=uv$ is a red crossing edge and $e_2=xv$ is blue crossing edge, where $u,x\in U$ and $v \in W$; see Figure~\ref{fig:simp1}.
We first show that there is a vertex $a \in V(G)\setminus\{u,v,x\}$ such that $au$ is blue but $ax$ is not blue, or there is a vertex $b \in V(G)\setminus\{u,v,x\}$ such that $bx$ is red but $bu$ is not red. If no such vertex $a$ exists, then $\deg_b(u)<\deg_b(x)$, and if no such vertex $b$ exists, then $\deg_r(x)<\deg_r(u)$. But since $\deg_r(u)=\deg_b(u)$ and $\deg_r(x)=\deg_b(x)$ we have $\deg_r(u)=\deg_b(u)<\deg_b(x)=\deg_r(x)$, which contradicts the previous sentence.

Hence such vertex $a$ or vertex $b$ exists. Without loss of generality assume that there exists a vertex $a$ such that $au$ is blue but $ax$ is not blue.

If $ax$ is red or black, then we apply the $\cS$-preserving swap $au,uv,xv,ax$ to $G$ to obtain $G^*,$  a consistent realization. Since $r(G^*,G') < r(G,G')$ therefore the induction applies.

\medskip\noindent
If $ax$ is white, then we apply the  swap $au, xv \Rightarrow ax, uv$ for realization $G$. (Here we apply Lemma \ref{th:extend} (ii).)  For the derived realization $G^*$ the number of red edges within $E(G) \bigtriangleup E(G^*)$ is smaller than $r(G,G')$ and induction applies.
\end{proof}
From now on we assume that there exist no adjacent red and blue crossing edges.
\begin{lemma}\label{clm:no_mixed_circuit}
Assume there exists a red-blue alternating trail $T$ where the first chord of $T$ is a red crossing one, the last chord is a blue crossing one, and the other chords of $T$ are not crossing ones. (Then $T$ consists of at least four chords.) Then we can identify an $\cS$ consistent realization $G^*$ with the property, that $|E(G)\triangle E(G^*)| < 2r(G,G')$ and $|E(G^*)\triangle E(G')|< 2 r(G,G')$, therefore the inductive hypothesis applies.
\end{lemma}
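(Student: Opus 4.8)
The plan is to realize the trail $T$ by a single \emph{shift-and-close} modification of $G$, and to verify directly that the resulting graph $G^*$ is $\cS$-consistent and strictly closer to both $G$ and $G'$, so that the induction driving Theorem~\ref{thm:doubleswaps} takes over. First I would fix notation: write $T=v_0v_1\cdots v_\ell$ with chords $e_i=v_{i-1}v_i$, where $e_1$ is the red crossing chord and $e_\ell$ the blue crossing chord. Since the colours alternate, $\ell$ is even; orienting the bipartition so that $v_0\in U$, the crossing of $e_1$ forces $v_1\in W$, and because every interior chord is non-crossing, all of $v_1,\dots,v_{\ell-1}$ lie in $W$, while $e_\ell$ being crossing puts $v_\ell\in U$. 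Thus the odd chords $e_1,e_3,\dots,e_{\ell-1}$ are the red edges (present in $G$), the even chords $e_2,\dots,e_\ell$ are the blue edges (present in $G'$), and $\ell\ge 4$ because $\ell=2$ would make $e_1,e_2$ adjacent crossing chords of opposite colour, which the standing assumption forbids. The same assumption also rules out the degenerate coincidences $v_1=v_{\ell-1}$ and $v_0=v_\ell$ (either would place a red and a blue crossing chord at a common vertex), so the chord $v_0v_{\ell-1}$ used below is distinct from every chord of $T$.

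I would then define $G^*$ by toggling the trail but replacing the last blue crossing chord with a new crossing chord at $v_0$:
\[
E(G^*)=\bigl(E(G)\setminus\{e_1,e_3,\dots,e_{\ell-1}\}\bigr)\cup\{e_2,e_4,\dots,e_{\ell-2}\}\cup\{v_0v_{\ell-1}\}.
\]
The verification is pure bookkeeping. At each interior vertex the trail contributes exactly one removed red and one added blue chord, so its degree is unchanged; the toggle leaves a deficit of $-1$ only at $v_0$ (losing $e_1$) and at $v_{\ell-1}$ (losing $e_{\ell-1}$ with no $e_\ell$ added), and the single new chord $v_0v_{\ell-1}$ repairs both, giving $\bd(G^*)=\bd$. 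For consistency I would check the crossing count: the toggle destroys exactly one crossing chord ($e_1$) and the repair creates exactly one ($v_0v_{\ell-1}$, a $U$--$W$ chord), whence $\epsilon(G^*)=\epsilon(G)=k$ and $G^*$ is consistent with $\cS(k)$.

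Next I would establish the two symmetric-difference bounds. On the $G'$ side, $G^*$ now agrees with $G'$ on all of $e_1,\dots,e_{\ell-1}$, so these chords leave $\nabla$; the only chords that can remain in or enter $E(G^*)\triangle E(G')$ are $e_\ell$ (still blue) and $v_0v_{\ell-1}$ (black if it already lay in $G'$, red otherwise), so a direct count yields $r(G^*,G')\le r(G,G')-\tfrac{\ell}{2}+1<r(G,G')$ because $\ell\ge 4$. On the $G$ side, $E(G)\triangle E(G^*)$ consists of exactly the $\ell$ toggled chords, so $r(G,G^*)=\ell/2$; to obtain the strict bound $\ell/2<r(G,G')$ I would invoke Lemma~\ref{th:1}: every vertex has equal red and blue, hence even total, degree in $\nabla$, whereas the endpoint $v_0$ of the open trail $T$ has odd degree in $T$, so $T$ cannot use all $\nabla$-chords at $v_0$ and must omit a red chord. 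With both quantities strictly below $r(G,G')$, the inductive hypothesis applies to $(G,G^*)$ and to $(G^*,G')$.

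The main obstacle is the validity of the repair, since the construction presupposes $v_0v_{\ell-1}\notin E(G)$. If instead $v_0v_{\ell-1}\in E(G)$, this is a crossing chord at $v_{\ell-1}$; were it red it would be adjacent to the blue crossing chord $e_\ell$, contradicting the standing assumption, so it must be black. I would then run the mirror construction based on $G'$, toggling $T$ and closing with $v_1v_\ell$; by the $G\leftrightarrow G'$, red$\leftrightarrow$blue symmetry this yields an equally good intermediate, valid unless $v_1v_\ell\in E(G')$, which (by the no-adjacent-crossing assumption now at $v_1$) again forces $v_1v_\ell$ to be black. The genuinely delicate sub-case is thus when both closing chords are black. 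Here a pure toggle cannot be repaired by addition alone — the only deficit vertices are $v_0$ and $v_{\ell-1}$, so the sole admissible new chord is the blocked one — and this is precisely where a double swap becomes unavoidable: I would reroute the two black crossing chords simultaneously with the exchange of $e_1$ and $e_\ell$, choosing the pairing so that the two constituent swaps change $\epsilon$ by $+2$ and $-2$ (hence the double swap is $\cS$-preserving), after first shortening the interior of $T$ with $\cS$-preserving within-$W$ swaps if needed. Checking that this double swap keeps both symmetric differences below $2\,r(G,G')$ is the crux of the argument.
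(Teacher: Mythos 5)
Your first construction is sound and is in fact exactly the paper's: your closing chord $v_0v_{\ell-1}$ is the paper's chord $uy$, your toggled circuit is the paper's $C=(R,vu,uy)$ applied to $G$, your consistency check ($e_1$ destroyed, $v_0v_{\ell-1}$ created, so $\epsilon$ is unchanged) is the paper's, and your two distance counts — including the appeal to Lemma~\ref{th:1} that the open trail cannot exhaust $\nabla$ at the endpoint $v_0$ — match the paper's argument. (One small slip: since $T$ uses a \emph{red} chord at $v_0$, it is a \emph{blue} chord of $\nabla$ at $v_0$ that must be omitted; the conclusion $r(G,G^*)=\ell/2<r(G,G')$ is unaffected.)

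The genuine gap is your treatment of the black case. When $v_0v_{\ell-1}\in E(G)$ (hence black), you abandon that closing chord, switch to the mirror chord $v_1v_\ell$, and land in a ``doubly black'' sub-case where you assert that no repaired toggle exists and retreat to an unverified double-swap sketch — you yourself flag the required distance check as ``the crux'' and never carry it out, so in that sub-case you do not produce the consistent realization $G^*$ that the lemma actually demands. The missing observation is that a black chord is an edge of $G'$ as well as of $G$, so the \emph{same} circuit $C=(e_1,e_2,\dots,e_{\ell-1},v_{\ell-1}v_0)$ is alternating with respect to $G'$: its red chords are non-edges of $G'$, while its blue chords and the black $v_0v_{\ell-1}$ are edges of $G'$. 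This is the paper's move: define $G^*$ by toggling $C$ in $G'$ rather than in $G$. The crossing chords of $C$ are $e_1$ (gained) and $v_0v_{\ell-1}$ (lost), so $\epsilon(G^*)=\epsilon(G')=k$ and $G^*$ is consistent; $r(G^*,G')=\ell/2<r(G,G')$ by your own open-trail argument; and $E(G)\triangle E(G^*)$ loses all of $e_1,\dots,e_{\ell-1}$ — at least two red chords, since the interior of $T$ contains a red one — while gaining only $v_0v_{\ell-1}$, so $r(G,G^*)\le r(G,G')-1$. Thus a single circuit toggle (applied to $G$ if $v_0v_{\ell-1}$ is white, to $G'$ if it is black) settles every case of this lemma; double swaps are not needed here at all — they enter only at the very end of the proof of Theorem~\ref{thm:doubleswaps}, to combine two vertex-disjoint circuits — so your final sub-case, and the auxiliary ``shortening of the interior of $T$'' it would require, dissolve entirely.
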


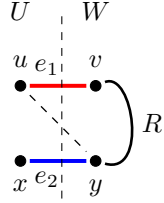
\begin{figure}[h]
\centering
\begin{tikzpicture}[scale=1]
\draw[color=black,fill=black] (0,0) node (u) [label=above:$u$] {} circle (2pt);
\draw[color=black,fill=black] (0,-1) node (x) [label=below:$x$] {} circle (2pt);
\draw[color=black,fill=black] (1,0) node (v) [label=above:$v$] {} circle (2pt);
\draw[color=black,fill=black] (1,-1) node (y) [label=below:$y$] {} circle (2pt);

\draw [line width=.5pt,dashed] (u) -- (y);
\draw [color=black,line width=1pt] (v) .. controls (1.5,.2) and (1.7,-1.2) .. (y) node [pos=0.5,right] {$R$};
\draw [line width=1.5pt,red] (u) -- (v) node[pos=0.3,above,color=black] {$e_1$};
\draw [line width=1.5pt,blue] (x) -- (y) node[pos=0.3,below,color=black] {$e_2$};

\draw [dashed] (.55,-1.5) -- (.55,1);
\draw (0,1) node {$U$};
\draw (1,1) node {$W$};
\end{tikzpicture}
\caption{A red crossing chord and a blue crossing chord connected by a red-blue alternating chord trail in~$G$. Recall, that a red chord is an actual edge in $G$ but a non-edge in $G'$ and a blue chord is a non-edge in $G$ but an edge in $G'$.} \label{fig:base}
\end{figure}

\begin{proof}
Suppose that such an alternating trail $T$ exists.  Let $e_1=uv$ be the red crossing edge in $T$ and $e_2=xy$ be the blue crossing edge, where $u,x\in U$, $v,y\in W$, and the other edges of~$T$ lie within $W$.  Let $R$ be the subtrail of $T$ from $v$ to $y$ formed from $T$ by removing $e_1$ and $e_2$, then $R$ contains at least two chords.  See Figure~\ref{fig:base} for a picture.

Consider the pair $uy$.  Since Lemma~\ref{clm:no_touching_red_and_blue} does not apply, $uy$ is a black or white chord. If $uy$ is black, then denote by $C$ the chord circuit $(R, vu, uy).$ This has exactly two crossing chords, one is red (an edge in $G$ but non-edge in $G'$), the other is blue (a non-edge in $G$ but an edge in $G'$), and it alternates between red and blue chords: every second chord is red, the others are blue. Denote by $C^*$ the chord circuit derived from $C$ by switching blue and red chords along $C$ and denote by $G^*$ the graph derived from $G'$ by exchanging $C$ and $C^*.$ Then $G^*$ is clearly another realization of $\bd$ and since $C^*$ has two crossing chords, one is an edge, the other is a non-edge, therefore, $G^*$  is consistent with the skeleton graph $\cS(k).$
Since trail $R$ contains at least one red chord, the number of red edges in $E(G) \bigtriangleup E(G^*)$ is smaller than $r(G,G')$ and, consequently, the inductive assumption applies for $G$ and $G^*$: there exists a sequence $\Sigma_1$ of $\cS$-preserving swaps and double swaps transforming $G$ into $G^*.$ Furthermore, since $T$ was not closed, the number of red edges in $E(G') \bigtriangleup E(G^*)$ is smaller than $r(G,G')$ and, therefore, the inductive assumption applies for $G^*$ and $G'$ as well: there exists a sequence $\Sigma_2$ of $\cS$-preserving swaps and double swaps transforming $G^*$ into $G'.$ So the swap-sequence $\Sigma_1 \circ \Sigma_2$ transforms $G$ into $G'$ via $\cS$-preserving operations.

\medskip\noindent
If $uy$ is white, then we consider exactly the same circuit $C$ and let $C^*$ be derived in the same way. However, $G^*$ is now derived from $G$ by exchanging $C$ and $C^*.$
Again $G^*$ is consistent with the skeleton graph $\cS(k).$ Since $r(G,G^*) = |T| /2 < r(G,G')$, there exists an $\cS$-preserving swap sequence $\Sigma_1$ from $G$ to $G^*.$ Similarly there exists an $\cS$-preserving swap sequence $\Sigma_2$ transforming $G^*$ into $G'.$ So the swap-sequence $\Sigma_1 \circ \Sigma_2$ transforms $G$ into $G'$ via $\cS$-preserving operations.
\end{proof}

\medskip\noindent Now we are ready to finish the proof of Theorem \ref{thm:doubleswaps}: Consider the symmetric difference $\nabla$. Assume at first there is a connected component in $\nabla$ containing both red and blue crossing chords. Then they are connected with an alternating chord trail and traversing it will reveal a red and blue crossing chord pair as in Figure \ref{fig:base}. Therefore we may assume that no connected component has both red and blue crossing chords. Next we decompose it into alternating chord circuits and let $C_1$ and $C_2$ be two of them, the first has red crossing chords while the second one has blue crossing chords. By our assumptions these two circuits are vertex disjoint.

Let $\Sigma_1$ denote a regular swap sequence processing $C_1$ and let $\Sigma_2$ denote the analogous regular sequence for $C_2$. Now we execute $\Sigma_1$ step by step while the required swaps are $\cS$-preserving. Let $\Sigma_1(i)$ denote the last performed operation. If the swap $\Sigma_1(j)$, where $j\le i$, produced realization $H$ and $r(H,G')$ is smaller than $r(G,G')$, then the inductive hypothesis applies. So we suppose here that the operations $\Sigma_1(1),\ldots, \Sigma_1(i)$ only split the alternating chord circuits into smaller ones.

At this point we execute the swap sequence $\Sigma_2$ step by step while the required swaps are $\cS(k)$-preserving. Let $\Sigma_2(j)$ denote the last performed operation. We let~$H$ denote the current realization, which is consistent with $\cS(k)$ and has $r(H,G')$ equal to $r(G,G').$

\medskip\noindent
Now we execute the {\bf double swap} $\Sigma_1(i+1)$ and $\Sigma_2(j+1)$ which together produce a new realization $H'$ which is consistent with $\cS(k)$ and for which $r(H',G')$ is smaller than $r(G,G').$ The inductive hypothesis applies. This completes the proof of Theorem~\ref{thm:doubleswaps}. \qed

\section{Multipartite graph realizations}

This section considers skeleton graphs with more than two classes but without loops. First, we consider skeleton graphs that contain exactly one odd cycle.

\begin{lemma}\label{th:oddcycle}
Let $V$ be the underlying vertex set with partition $\Pi=(U_1,\ldots,$ $U_{2k-1})$, let $\bd$ be a sequence of~$|V|$ integers, and~$\cS=(\Pi;\cB,w)$ a skeleton graph consisting of exactly one odd cycle $U_1U_2, U_2U_3,\cdots ,U_{2k-1}U_1$ with an otherwise undefined weight function $w$. Then there exists at most one weight function $w$ for which $\G(\cS(w))$ is not empty. $($Here $\cS(w)$ is a shorthand for $\cS=(\Pi;\cB,w).)$
\end{lemma}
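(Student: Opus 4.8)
The plan is to show that consistency with $\cS$ forces a cyclic system of linear equations relating the bone weights to the class degree-sums, and that this system has a unique solution precisely because the cycle is odd.

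First I would record the only constraint the degree sequence imposes. Write $x_i := w(U_iU_{i+1})$ for $i=1,\dots,2k-1$, with indices read cyclically so that $x_{2k-1}=w(U_{2k-1}U_1)$ and $x_0:=x_{2k-1}$, and set $D_i := \sum_{v\in U_i} d(v)$, the sum of the prescribed degrees in class $U_i$. Suppose $\G(\cS(w))$ is nonempty and let $G$ be a consistent realization. Since $\cS$ has no loops, every edge of $G$ joins two cyclically adjacent classes, so each edge incident to a vertex of $U_i$ lies either in $G[U_{i-1},U_i]$ or in $G[U_i,U_{i+1}]$, and no edge lies inside $U_i$. Counting edge-endpoints in $U_i$ two ways, and using consistency (\ref{eq:skel}) to identify $x_{i-1}=|E(G[U_{i-1},U_i])|$ and $x_i=|E(G[U_i,U_{i+1}])|$, gives
\begin{equation}\label{eq:oddsys}
D_i = x_{i-1}+x_i \qquad (i=1,\dots,2k-1).
\end{equation}
Thus any weight function admitting a consistent realization must satisfy the cyclic system (\ref{eq:oddsys}), whose right-hand side $D_1,\dots,D_{2k-1}$ depends only on $\bd$.

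The heart of the matter is that (\ref{eq:oddsys}) has at most one solution. Suppose $w$ and $w'$ both satisfy it, with values $x_i$ and $x_i'$, and set $y_i := x_i - x_i'$, so that $y_{i-1}+y_i=0$ for every $i$. The $2k-2$ equations for $i=2,\dots,2k-1$ give $y_i=-y_{i-1}$, hence $y_i=(-1)^{i-1}y_1$; in particular $y_{2k-1}=(-1)^{2k-2}y_1=y_1$. The remaining equation (the case $i=1$) reads $y_{2k-1}+y_1=0$, which therefore becomes $2y_1=0$. Hence $y_1=0$, so $y_i=0$ for all $i$, giving $w=w'$. This is exactly the assertion of the lemma: at most one weight function makes $\G(\cS(w))$ nonempty.

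The single place where the hypothesis is genuinely used, and the step I would flag as the crux, is the wrap-around computation $y_{2k-1}=(-1)^{2k-2}y_1=y_1$: the exponent $2k-2$ is even precisely because the cycle has odd length $2k-1$, which is what collapses the alternating relation into $2y_1=0$. Had the cycle been even of length $2\ell$, the same computation would yield $y_{2\ell}=(-1)^{2\ell-1}y_1=-y_1$, making the wrap-around equation an identity and leaving a one-parameter family of difference solutions; uniqueness would then genuinely fail, which is why the lemma is restricted to odd cycles. No further work is needed, since the statement asserts only uniqueness (``at most one''): I do not need to exhibit a realization or verify that the forced values $x_i$ are nonnegative integers, only that no two distinct weight functions can both work.
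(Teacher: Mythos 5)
Your proof is correct and takes essentially the same approach as the paper: both reduce the problem to the cyclic linear system $D_i = x_{i-1}+x_i$ obtained by counting edge-endpoints in each class, and both use the odd cycle length to pin the solution down---the paper by propagating $\alpha = w(U_1U_2)$ around the cycle until the closing relation $\alpha+\beta = D(U_{2k-1})$ determines $\alpha$ (the coefficient of $\alpha$ surviving with factor $2$), you by the equivalent homogeneous computation $y_{2k-1}=(-1)^{2k-2}y_1$ forcing $2y_1=0$. Your closing observation that uniqueness fails for even cycles matches exactly the paper's subsequent treatment of that case.
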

\begin{proof}
Assume that $G$ is a realization of $\bd$ which is weakly consistent with skeleton graph $\cS = (\Pi; \cB).$ (Recall: there is no edge in $G$ outside $\cB$.) Let $D(U)$ (for $U \in \Pi$) denote $D(U)=\sum_{u \in U}d(u).$ (This is the total degree of $U$ in the skeleton graph.) Furthermore let $\alpha$ denote the number of edges in the bone $U_1U_2$. Then there are $D(U_1) - \alpha$ edges in the bone $U_1U_{2k-1}.$

We can calculate the number of edges in the bone $U_2 U_3$: it is $D(U_2) - \alpha.$ The number of edges along the bone $U_3 U_4$ is $D(U_3)- D(U_2) + \alpha$.
And so on: we can calculate the number of edges in all bones, one by one. We finish it to calculate the number of edges in the bone $U_{2k-2}, U_{2k-1}$ which is some $\beta,$ a linear function of $\alpha$ with coefficient 1. Finally we know that $\alpha + \beta = D(U_{2k-1}).$ So $\alpha$ is fully determined. If a weakly consistent realizations exists then the solution for $\alpha$ must provide non-negative integer values $w^*(U_iU_{i+1})$ for all bones.
\end{proof}
\noindent
The easy consequence is that all weakly consistent realizations are consistent with exactly the same weight function: each belongs to $\cS(w^*)$.

By the above lemma, we can solve the existence problem for this particular skeleton graph by deciding if there exists a realization of~$\bd$ that is weakly consistent with~$\cS$. But Observation \ref{th:weak} does exactly this for us.

\begin{corollary}\label{th:corol}
There exists a polynomial algorithm to decide the existence of a consistent realization to the skeleton graph problem above.
\end{corollary}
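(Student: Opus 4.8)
The plan is to reduce the existence of a \emph{consistent} realization to that of a merely \emph{weakly} consistent one and then invoke Observation~\ref{th:weak}, which already settles the latter in strongly polynomial time. The engine driving this reduction is Lemma~\ref{th:oddcycle} together with its stated consequence: for a single odd cycle the bone weights of any weakly consistent realization are forced, so that there is a unique admissible weight function $w^*$, and \emph{every} weakly consistent realization is automatically consistent with $\cS(w^*)$. Hence ``a consistent realization of $\bd$ exists'' is equivalent to ``a weakly consistent realization of $\bd$ exists,'' and the corollary will follow by running the weak-consistency test once.

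Concretely, first I would make the forced weights algorithmic. Following the computation in the proof of Lemma~\ref{th:oddcycle}, let $\alpha$ be the number of edges in the bone $U_1U_2$; traversing the cycle expresses the edge count of each successive bone as an affine function of $\alpha$ whose $\alpha$-coefficient alternates in sign. Because the cycle has \emph{odd} length, these coefficients fail to cancel when the cycle closes, so the final incidence relation at $U_{2k-1}$ becomes a nontrivial linear equation in $\alpha$ and pins $\alpha$ down uniquely. This yields, in time linear in the number of classes, a candidate weight $w^*(U_iU_{i+1})$ on every bone. If the prescribed input weight $w$ differs from $w^*$, or if $w^*$ itself has a negative or non-integer entry, then by Lemma~\ref{th:oddcycle} no realization can be consistent with $w$, and I output ``no.'' Otherwise $w=w^*$ is confirmed and I proceed to the second step: build the graph $G$ of all chords (all vertex pairs lying inside a bone), set $f=\bd$, and let Edmonds' blossom algorithm on the Tutte gadget $\mathbb{T}(G,f)$ decide whether a realization of $\bd$ weakly consistent with $\cS$ exists.

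The correctness of the reduction rests entirely on the point I expect to need the most care, which is exactly the content of Lemma~\ref{th:oddcycle}: here weak consistency \emph{implies} consistency, because the bone edge-counts are determined by the degree sums $D(U_i)$ alone, so any realization whose edges are all chords necessarily carries precisely $w^*(U_iU_{i+1})$ edges on each bone. The delicate ingredient is that it is the \emph{oddness} of the cycle, not merely its being a single cycle, that forces $\alpha$: for an even cycle the alternating coefficients cancel upon closing, $\alpha$ becomes a free parameter, and the reduction would collapse. Granting that fact, the single blossom-algorithm call answers the existence question, and composing it with the linear-time determination of $w^*$ gives the desired polynomial algorithm, so the corollary requires no work beyond assembling these two ingredients.
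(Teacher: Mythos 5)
Your proposal is correct and follows essentially the same route as the paper: Lemma~\ref{th:oddcycle} forces a unique candidate weight function $w^*$, so every weakly consistent realization is automatically consistent with $\cS(w^*)$, and the existence question reduces to the weak-consistency test of Observation~\ref{th:weak} (Tutte gadget plus Edmonds' blossom algorithm). Your added explicit checks---that $w^*$ is a non-negative integer vector and, if a weight function is prescribed, that it equals $w^*$---are just a careful spelling-out of what the paper leaves implicit.
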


\medskip
Before we proceed to even cycles we need some definitions and a result about restricted degree sequence problems from~\cite{restricted}. Let $F$ be a subset of the vertex pairs from $V$. The other vertex pairs on $V$ are called {\bf chords}. Let $\bd$ be a degree sequence on $V$.  We are interested in those realizations of $\bd$ which completely miss $F$ (the \emph{forbidden} set of non-chords). The set of all such realizations is denoted by $\G^F(\bd)$.

Consider a realization $G=(V,E) \in \G^F(\bd)$ where $v_1,\ldots ,v_{2k}$ is an alternating (edge, non-edge, ... etc.) circuit of chords. Assume that all pairs $v_iv_j$ which would divide $C$ into two even chord circuits (these are the pairs $i,j\in\{1, \ldots ,2k\}$ with $j=i + 1 \pmod{2}$ and $|i-j|>1$) are forbidden (they are not chords). Then the operation which exchanges the edges and non-edges along the circuit $C$ is called an {\bf $F$-swap}. It is clear that if $F=\emptyset$ then this notion coincides with Havel's swap notion. The following result follows directly from Theorem~2.3 of~\cite{restricted}: the space $\G^F(\bd)$ is connected under $F$-swaps. More precisely:
\begin{theorem}[\cite{restricted}]\label{th:Fswaps}
Let $G, G' \in \G^F(\bd)$ be two realizations. Then there exists a sequence of $F$-swaps which turns $G$ into $G'$. Moreover if the symmetric difference between $G$ and $G'$ is one alternating chord circuit, then all the $F$-swaps happen within the vertex set of $C.$ \qed
\end{theorem}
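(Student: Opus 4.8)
The plan is to prove the statement by induction on $r(G,G')=\tfrac12|E(G)\triangle E(G')|$, following the same skeleton as the circuit arguments used earlier in this paper. First I would record that, exactly as in Lemma~\ref{th:1}, the symmetric difference $\nabla=E(G)\triangle E(G')$ has equal numbers of $G$-edges and $G'$-edges at every vertex, so it decomposes into alternating chord circuits (Remark~\ref{th:extend}(i)). This immediately reduces the general statement to the ``moreover'' statement: if $C_1,\dots,C_t$ is such a decomposition, I would flip the circuits one at a time, setting $G_s$ to be $G_{s-1}$ with the edges and non-edges of $C_s$ exchanged. Each $G_s$ lies in $\G^F(\bd)$ because the only new edges are edges of $G'$, hence chords, and $G_{s-1}\triangle G_s=C_s$ is a single alternating chord circuit; so it suffices to realize each flip $G_{s-1}\to G_s$ by $F$-swaps inside $V(C_s)$. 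Thus the whole theorem follows once the case $\nabla=C$ (a single circuit) is settled, and that case is precisely the ``moreover'' clause.

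For a single circuit $C=v_1v_2\cdots v_{2k}$ I would induct on its length $2k$. The decisive dichotomy is whether some \emph{splitting diagonal} of $C$---a pair $v_iv_j$ of opposite parity, non-adjacent on $C$---is a chord. If every splitting diagonal is forbidden, then by the very definition of an $F$-swap the exchange along $C$ is itself a single $F$-swap, and we are done (this also covers the base case $2k=4$, where no splitting diagonals exist at all). Otherwise fix a splitting diagonal $d=v_av_b$ that is a chord. Since $d$ is not an edge of the circuit $C=\nabla$, it is present in $G$ if and only if it is present in $G'$; that is, $d$ is \emph{black} (an edge of both) or \emph{white} (an edge of neither).

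I would then use $d$ to cut $C$ into two strictly shorter alternating circuits. Consider the circuit $C'$ consisting of the arc $v_a,v_{a+1},\dots,v_b$ together with $d$. A short parity check shows that when $d$ is white, $C'$ is alternating with respect to $G$, and when $d$ is black, $C'$ is alternating with respect to $G'$. In the white case set $G^{*}=G\triangle C'$; in the black case set $G^{*}=G'\triangle C'$. In either case $G^{*}$ is again a realization of $\bd$ whose newly created edges are either edges of $G'$ or the chord $d$, so $G^{*}\in\G^F(\bd)$, and one computes the two residual symmetric differences $G\triangle G^{*}$ and $G^{*}\triangle G'$ to be the two sub-circuits $C'$ and $C''=C\triangle C'$ (in an order depending on the case). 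Both are single alternating chord circuits, both are strictly shorter than $C$ (because $d$ is neither a circuit edge nor adjacent on $C$), and both live on $V(C)$. Applying the inductive hypothesis to the pairs $(G,G^{*})$ and $(G^{*},G')$ and concatenating the two $F$-swap sequences transforms $G$ into $G'$ using only $F$-swaps inside $V(C)$, as required.

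The step I expect to be the main obstacle is the bookkeeping surrounding the cut: one must verify that the chosen $C'$ is genuinely an alternating circuit of the \emph{current} graph (which is where the black/white distinction forces $G^{*}$ to be built from $G$ in one case and from $G'$ in the other), that the exchange along $C'$ keeps the graph simple and free of forbidden edges, and that both residual symmetric differences really are single alternating chord circuits of smaller length. Once this is in place the induction is routine, and the dichotomy ``a chord splitting diagonal exists, or else the whole circuit is an $F$-swap'' is exactly what makes the definition of an $F$-swap the correct irreducible move.
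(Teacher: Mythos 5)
You should know at the outset that the paper contains no proof of this statement at all: it is quoted, with a \qed, as following directly from Theorem~2.3 of~\cite{restricted}. So there is no internal argument to compare yours against; what you have produced is a reconstruction of the cited external result. Your reconstruction is sound, and it is very much in the spirit of the techniques the paper does use: the reduction of the general statement to the single-circuit case via the decomposition of Lemma~\ref{th:1} and Remark~\ref{th:extend}(i) (checking, as you do, that each intermediate flip stays in $\G^F(\bd)$ because the new edges are $G'$-edges, hence chords), and the black/white dichotomy for the splitting diagonal $d$ is precisely the device of Lemma~\ref{clm:no_mixed_circuit}, where the circuit is closed up with the pair $uy$ and the graph to be modified ($G$ or $G'$) is chosen according to the colour of that pair. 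The dichotomy ``some splitting diagonal is a chord, in which case the circuit cuts into two strictly shorter alternating circuits on $V(C)$, or else every splitting diagonal is forbidden and the whole exchange is by definition a single $F$-swap'' is exactly what makes the induction on circuit length close, and it explains why the $F$-swap is the right irreducible move.

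Two bookkeeping points should be made explicit before the argument is complete. First, the arc choice in the cut: since $v_a$ and $v_b$ occupy opposite-parity positions, both $v_a$--$v_b$ arcs have odd length, one having both end edges red (edges of $G$) and the other both end edges blue (edges of $G'$); your parity claim holds only for the correct arc --- closing $d$ with the red-ended arc yields a circuit alternating in $G$ when $d$ is white and alternating in $G'$ when $d$ is black --- and one must then verify (as indeed one can) that the complementary circuit $C''=C\triangle C'$ alternates with respect to the residual pair of realizations. Second, you tacitly assume $C$ is a simple circuit, whereas by Lemma~\ref{th:1} the circuits in the decomposition may visit a vertex twice; a vertex appearing at two opposite-parity positions makes the corresponding ``diagonal'' degenerate ($v_i=v_j$), so it is neither a chord nor a forbidden pair, and a vertex repeated at same-parity positions lets $C$ split into two even alternating circuits with no chord at all. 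Your induction survives if the dichotomy quantifies only over genuine vertex pairs and the same-parity repetition is handled by splitting at the repeated vertex and flipping the two sub-circuits in turn (both inside $V(C)$, intermediate realization again in $\G^F(\bd)$), but this case analysis needs to be stated; it is the one place where the clean picture of a simple $2k$-gon with diagonals genuinely oversimplifies the object delivered by Lemma~\ref{th:1}.
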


\bigskip\noindent
We continue our investigations with considering  skeleton graphs consisting of one even cycle: we use similar notations as before except that the last vertex partition is denoted by $U_{2k}$ and the cycle is modified accordingly.

The existence problem for such skeleton graphs was originally raised for the case $k=2$ by L\'aszl\'o A. Sz\'ekely~\cite{szekely}; he also suggested a solution for this particular question.

Now we can do the same calculation here that happened in the proof of Lemma \ref{th:oddcycle}. However, the final equation contains no $\alpha$, it is just an alternating sum of $D(U_i)$s, an identity. Therefore there may exist several feasible values for $\alpha,$ and it is possible to find all feasible weight functions (a constant number) and to find at least one consistent realization for each feasible weight function in polynomial time. More precisely:
\begin{lemma}\label{th:evencycle}
Let $\cS=(\Pi;\cB)$ be a skeleton graph consisting of exactly one cycle $C$ with an undefined weight function. Let $\alpha$ denote the weight of $U_1U_2$ under a realization consistent with~$\cS$. Then there exist a minimum possible value $\alpha_m$ and a maximum possible value $\alpha_M$, and each value $\alpha= \alpha_m,\alpha_m+1, \ldots, \alpha_M$ appears as feasible bone weight on $U_1U_2$. Finally one can provide one consistent realization for each possible weight function in polynomial time.
\end{lemma}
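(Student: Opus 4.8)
The plan is to mirror, for a single even cycle, the two-step strategy used for $\cS(k)$ in Section~3: first pin down the candidate values of the free parameter, then settle existence for each of them. I would begin by repeating the degree‑counting computation from the proof of Lemma~\ref{th:oddcycle}. Writing $w_i$ for the weight of the bone $U_iU_{i+1}$ (indices mod $2k$) and $D(U_i)=\sum_{v\in U_i}d(v)$, every weakly consistent realization satisfies $w_{i-1}+w_i=D(U_i)$, since each vertex of $U_i$ sends all its edges into $U_{i-1}\cup U_{i+1}$. Setting $\alpha:=w_1=w(U_1U_2)$ solves this triangular system as $w_i(\alpha)=(-1)^{i-1}\alpha+c_i$ for explicit constants $c_i$, and the closing equation at $U_1$ becomes an identity (the bipartite balance $\sum_i(-1)^iD(U_i)=0$), so $\alpha$ is constrained only by the $2k$ non‑negativity inequalities $(-1)^{i-1}\alpha+c_i\ge 0$. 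Their common solution is an integer interval $[\underline\alpha,\overline\alpha]$. Because the entire weight function is determined by $\alpha$, feasible weight functions correspond bijectively to feasible values of $\alpha$, and it remains to decide which $\alpha\in[\underline\alpha,\overline\alpha]$ are realizable.

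Next I would locate the extreme feasible values exactly as in the proof of Theorem~\ref{thm:2part}. Form the Tutte gadget $\mathbb{T}(K,\bd)$ on the chord graph $K$ (the bipartite graph whose edges are the chords, i.e.\ the pairs in adjacent classes), assign weight $1$ to the gadget edges $e_ve_u$ coming from chords on the bone $U_1U_2$ and weight $0$ to all others, and run Edmonds' blossom algorithm. A maximum‑weight perfect matching yields a weakly consistent realization maximizing $w(U_1U_2)$, hence $\alpha_M$ together with a consistent realization for the corresponding weight function; reversing the weights gives $\alpha_m$ and its realization. If there is no perfect matching, then by Observation~\ref{th:weak} no weakly consistent realization exists and the problem is infeasible.

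The main work, and the main obstacle, is the contiguity statement that every integer $\alpha$ with $\alpha_m\le\alpha\le\alpha_M$ is feasible. This is the even‑cycle analogue of Lemma~\ref{th:parity}, but the parity phenomenon disappears, so I must raise $\alpha$ in steps of exactly one. I would reduce everything to the single step: if a weakly consistent $G$ has $w(U_1U_2)=\alpha<\alpha_M$, then some weakly consistent realization has $w(U_1U_2)=\alpha+1$. The natural tool is an augmenting alternating chord circuit, whose exchange keeps all edges on chords, preserves $\bd$, and raises the number of $U_1U_2$-edges by exactly one; the cleanest instance is a circuit $v_1v_2\cdots v_{2k}v_1$ with $v_i\in U_i$ winding once around the cycle. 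One checks that it is even, that its dividing diagonals join non‑adjacent classes and are therefore non‑chords, so it is a legitimate $F$-swap in the sense of Theorem~\ref{th:Fswaps}, and that exchanging it alters each $w_i$ by $(-1)^{i-1}$, i.e.\ changes $\alpha$ by $+1$. The hard part is proving that such a $+1$ circuit always exists when $\alpha<\alpha_M$. I would take $G_M$ realizing $\alpha_M$, decompose $\nabla=E(G)\triangle E(G_M)$ into alternating chord circuits by Lemma~\ref{th:1}, attach to each circuit $C$ its integer winding $\delta(C)$ (the change in $w(U_1U_2)$ on exchanging $C$), note $\sum_C\delta(C)=\alpha_M-\alpha>0$, and then show that the total positive winding can be realized in unit increments along chords, possibly using white chords outside $\nabla$. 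Equivalently, letting $\alpha^+$ be the least feasible value above $\alpha$ and assuming $\alpha^+>\alpha+1$, minimality of $\alpha^+$ forbids any sub‑collection of these circuits from producing a value strictly between $\alpha$ and $\alpha^+$, which reduces the claim to rerouting a single alternating chord circuit of winding at least two into one of winding one. I expect this indivisibility step to be the crux: for general graphs the number of edges of a fixed set used by a factor need not take contiguous values (witness the three perfect matchings of $K_4$), so the argument must genuinely exploit that the classes form a single \emph{even} cycle and that the chord graph $K$ is bipartite.

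Finally, for the algorithmic conclusion I would output a realization for every feasible weight function in polynomial time: compute $[\underline\alpha,\overline\alpha]$ in $O(k)$ time, obtain $\alpha_m,\alpha_M$ and their realizations from the two matching computations above, and then generate realizations for $\alpha_m+1,\dots,\alpha_M-1$ one at a time by applying the unit augmenting circuits of the previous paragraph, each located by a single augmenting‑cycle (matching) computation in $K$. Since there are at most $\alpha_M-\alpha_m+1\le |V|+1$ values and each step is polynomial, the total running time is polynomial. Together with contiguity this shows that the feasible values are exactly the integers of $[\alpha_m,\alpha_M]$, which is the assertion of the lemma.
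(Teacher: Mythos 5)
Your first two steps coincide with the paper's proof: the parametrization of all candidate weight functions by the single value $\alpha$ via the alternating linear system $w_{i-1}+w_i=D(U_i)$, and the determination of $\alpha_m$ and $\alpha_M$ (together with consistent realizations for these two extremes) by running Edmonds' algorithm on the Tutte gadget with $0/1$ weights, as in Observation~\ref{th:weak} and the proof of Theorem~\ref{thm:2part}. Up to that point the proposal is sound. A small caveat: the closing equation $\sum_i(-1)^iD(U_i)=0$ is a feasibility condition on $\bd$, not an automatic identity, though the paper glosses this the same way.

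The genuine gap is exactly the step you yourself flag as the crux: you never prove that a unit-increment exists whenever $\alpha<\alpha_M$, and your sketch (decompose $\nabla$ by Lemma~\ref{th:1}, attach winding numbers summing to $\alpha_M-\alpha$, then ``reroute'' a circuit of winding at least two into one of winding one) is left without an argument; as your own $K_4$ remark concedes, such indivisibility fails for general factor problems, so something specific must be supplied. The paper closes this hole by using Theorem~\ref{th:Fswaps} in a different role than you do: not merely to certify that one hand-built circuit is a legal $F$-swap, but as a \emph{connectivity} statement. Declaring the pairs inside bones to be chords and everything else forbidden, the theorem yields a sequence of $F$-swaps transforming the $\alpha_m$-realization into the $\alpha_M$-realization. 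The decisive observation is then about step size rather than existence: any circuit along which an $F$-swap is permitted winds around the class cycle either an even number of times, in which case every bone weight is unchanged, or an odd number of times, in which case the bone weights change by exactly one, alternately in sign --- the forbidden-diagonal condition in the definition of an $F$-swap is what rules out larger jumps, since a circuit winding more often contains a chord diagonal splitting it into two even circuits. Hence $\alpha$ moves by at most $1$ per $F$-swap, and a discrete intermediate-value argument along the sequence shows every integer in $[\alpha_m,\alpha_M]$ is attained, with the intermediate realizations read off the sequence in polynomial time. This also replaces your unsupported algorithmic claim that each unit step can be ``located by a single augmenting-cycle (matching) computation'': the paper never needs to search for an augmenting circuit of winding one, because the $F$-swap sequence supplied by Theorem~\ref{th:Fswaps} produces all intermediate realizations along the way.
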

\begin{proof}
When our cycle $C$ has odd length, then Lemma \ref{th:oddcycle} and Corollary  \ref{th:corol} apply and we have nothing to prove. So assume now that $C=(U_1U_2,\ldots, U_{2k}U_1).$ Applying the method of Observation \ref{th:weak} with weight 0 for all chords in the  bone $U_1U_2$ and 1 for all other chords, the derived maximum weight perfect 1-factor provides the value $\alpha_m$ and a corresponding degree sequence realization. If we consider the opposite weight function then the maximum weight perfect 1-factor  provides the value $\alpha_M$ and a corresponding degree sequence realization.

Finally one can find in polynomial time at least one realization for each value $\alpha=\alpha_m,\alpha_m+1,\ldots, \alpha_M$ applying Theorem \ref{th:Fswaps} as follows: from this statement we know that $G$ can be transformed into $G'$ using $F$-swaps. In this setup the chords are the vertex pairs within the bones, all other vertex pairs are forbidden. When $G$ and $G'$ are from $\G(\Pi, \cB)$ then they can be consistent with different weight functions $w$ and $w'.$  In any procedure transforming $G$ into $G'$ each $F$-swap alters the edges and non-edges along an alternating chord circuit $C'$. There are two possibilities: this $C'$ can go around the bone-circuit $C$ zero or an even number of times or an odd number of times. In the first case for each bone the number of edges in this bone before and after the swap will be the same. In the second case the numbers of the edges within the bones increase and decrease with exactly one, alternately. Therefore each possible value $\alpha$ between $\alpha_m$ and $\alpha_M$ must occur in the bone $U_1U_2.$
\end{proof}

\bigskip\noindent
Now we discuss in short another possible multipartite skeleton graph:
\begin{lemma}\label{th:tree}
Let the skeleton graph $\cS=(\Pi; \cB, w)$ be a tree and let $L$ be a leaf in this tree. Assume that realization $G \in \G(\bd)$ is consistent with $\cS.$ Then the weight function $w$ and the value $D(L)$ are completely determined by the values $D(U),\ U\ne L.$
\end{lemma}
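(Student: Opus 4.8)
The plan is to exploit the single counting identity that a loopless tree skeleton forces on any consistent realization. Since $\cS$ is a tree (in particular it has no loops), a consistent realization $G$ has no edges inside any class, so every edge meeting a vertex of a class $U$ is a crossing edge of exactly one bone incident to $U$. Counting edge-endpoints in $U$ in two ways therefore gives the fundamental relation
\[
D(U)\;=\;\sum_{W:\,UW\in\cB} w(UW)\qquad\text{for every }U\in\Pi,
\]
since $|E(G[U,W])|=w(UW)$ by consistency. This one family of linear equations is all I will need.

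First I would root the tree at the leaf $L$ and orient each bone from child to parent. Because $L$ is a leaf, it has a unique neighbour $P$, and in the rooted tree every bone is the parent-bone of exactly one non-root class. I then propagate weights from the leaves of the rooted tree upward. For a class $U\neq L$ that is a leaf of the tree, its only bone is $U\,P(U)$, so the identity reads $w(U\,P(U))=D(U)$, which is known. For an internal class $U\neq L$, once the weights of all bones to its children have been determined, the identity gives
\[
w\big(U\,P(U)\big)\;=\;D(U)-\sum_{c\ \text{child of}\ U} w(Uc),
\]
so the parent-bone weight of $U$ is forced as well. Proceeding by induction on the distance from the leaves, every parent-bone weight, and hence (since every bone is some class's parent-bone) the entire weight function $w$, is uniquely determined.

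The crucial point is that this computation never invokes the equation belonging to the root $L$: each step uses only $D(U)$ for the non-root class $U$ currently being processed. Hence $w$ is expressed purely in terms of $\{D(U):U\neq L\}$. Finally, applying the identity at $L$ itself, whose only bone is $LP$, yields $D(L)=w(LP)$, a quantity already pinned down by the values $D(U)$ with $U\neq L$; this determines $D(L)$ as claimed.

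The main obstacle is bookkeeping rather than a deep difficulty: one must check that the bottom-up order is well defined (it is, because a finite tree rooted at $L$ carries a well-founded child-to-parent order) and, more importantly, that the root equation is genuinely redundant, so that $D(L)$ emerges as an \emph{output} of the procedure rather than an input. The content of the lemma is precisely that the tree structure makes the system triangular with $L$'s equation left over, in contrast to the cyclic skeletons of Lemmas~\ref{th:oddcycle} and~\ref{th:evencycle}, where closing the cycle either over-determines or under-determines the weights.
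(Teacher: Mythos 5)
Your proposal is correct and follows essentially the same route as the paper: the paper also propagates bone weights from the leaves different from $L$ along the paths toward $L$, concluding that the weight of the unique bone at $L$ must equal $D(L)$. Your version merely makes the bookkeeping explicit (rooting at $L$, the degree identity $D(U)=\sum_{W:\,UW\in\cB} w(UW)$, and induction on distance from the leaves), which is a welcome but not substantively different elaboration.
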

\begin{proof}
This statement is almost trivial. One can argue in the same way as it happened in the proof of Lemma \ref{th:oddcycle}: starting from the leaves different from $L$ and working along the paths toward $L$ one can determine all $D(UW)$ values along the tree.
At the last step on the unique bone $UL$ the value $w(UL)$ must be the same as $D(L).$
\end{proof}
Putting together these statements we have the following result:
\begin{theorem}\label{th:mixed}
Let $\bd$ be a sequence of $|V|$ positive integers and let $\cS=(\Pi;\cB)$ be a connected skeleton graph with at most one cycle. Then we can find in polynomial time all weight functions $w$ for which there exists realizations of $\bd$ which are consistent with the skeleton graph $\cS=(\Pi; \cB, w)$ along with at least one realization for each possible weight function.
\end{theorem}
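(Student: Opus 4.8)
The plan is to split on the structure of the connected skeleton graph $\cS=(\Pi;\cB)$: since it has at most one cycle, it is either a tree or it is unicyclic, and I would handle these two cases with the machinery already assembled. Throughout I write $D(U)=\sum_{u\in U}d(u)$ and use that, in the loopless setting considered here, every weakly consistent realization satisfies $D(U)=\sum_{W}w(UW)$ for each class $U$. Hence the numbers $D(U)$ are invariants of the pair $(\bd,\Pi)$ that constrain which weight functions can occur, and in particular they are preserved by every swap or $F$-swap.

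First, the tree case. By Lemma~\ref{th:tree}, fixing any leaf $L$ the weight function $w$ and the value $D(L)$ are forced by the remaining $D(U)$, so there is at most one candidate weight function $w^{*}$, which I would compute by the leaf-peeling recursion of that lemma (also checking the final consistency condition $w(UL)=D(L)$; if it fails, no admissible $w$ exists). Because the $D(U)$ are invariants, every weakly consistent realization must attain exactly these forced weights, so being weakly consistent with $\cS$ coincides with being consistent with $\cS(w^{*})$. Therefore Observation~\ref{th:weak} decides existence and produces a realization in strongly polynomial time, finishing the tree case.

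Next, the unicyclic case, with $C$ the unique cycle and trees hanging off its vertices. I would first peel those trees exactly as in Lemma~\ref{th:tree}; this forces every \emph{tree-bone} weight uniquely, and since the $D(U)$ are invariants it forces the same tree-bone weights in \emph{every} weakly consistent realization. The only remaining freedom is then the cycle, parametrised by $\alpha=w(U_1U_2)$. If $C$ is odd, Lemma~\ref{th:oddcycle} forces $\alpha$ too, giving a single candidate weight function whose realizability is then decided, and a realization produced, by Observation~\ref{th:weak} (this is Corollary~\ref{th:corol}). If $C$ is even, I would mimic the proof of Lemma~\ref{th:evencycle}: Edmonds' algorithm on the Tutte gadget of the chord graph, with weight $0$ on the chords of bone $U_1U_2$ and weight $1$ elsewhere, yields (whenever a perfect matching exists at all) a weakly consistent realization $G_m$ attaining the minimum value $\alpha_m$, and reversing the weights yields $G_M$ attaining $\alpha_M$. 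The admissible weight functions are then precisely those carrying the forced tree-bone weights together with $\alpha\in\{\alpha_m,\dots,\alpha_M\}$, a list of polynomial length.

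It remains to produce one consistent realization for each intermediate $\alpha$, and this is where I expect the main obstacle. I would connect $G_m$ and $G_M$ by a sequence of $F$-swaps, $F$ being the set of non-chords, via Theorem~\ref{th:Fswaps}, and the delicate point is to verify that this sequence visits a realization for \emph{every} intermediate value of $\alpha$ even with trees attached. Each $F$-swap exchanges edges and non-edges along an alternating chord circuit $C'$ whose projection to the bone graph is a closed walk; because $\cS$ is unicyclic, every tree-bone is a bridge and so is traversed a net-zero number of times, which keeps the forced tree-bone weights unchanged and makes the effect on the cycle bones depend only on the winding of $C'$ around $C$, exactly as analysed in Lemma~\ref{th:evencycle}. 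Granting that this reduces the change of $\alpha$ to the pure-cycle situation, $\alpha$ moves by at most one per step, so by discrete continuity every value between $\alpha_m$ and $\alpha_M$ is hit, giving a consistent realization for each admissible weight function. The whole procedure is polynomial, since each of the finitely many weight functions is produced by a single matching computation or by the swap-interpolation above.
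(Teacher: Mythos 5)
Your proposal is correct and follows essentially the same route as the paper: handle the pure tree case by Lemma~\ref{th:tree} plus Observation~\ref{th:weak}, peel the hanging subtrees to force the tree-bone weights and residual degrees, then settle the remaining cycle by Lemma~\ref{th:oddcycle}/Corollary~\ref{th:corol} when it is odd and by the min/max matching computation and $F$-swap interpolation of Lemma~\ref{th:evencycle} when it is even. You in fact go further than the paper at the one delicate spot (the paper simply cites Lemma~\ref{th:evencycle}, which is stated for a bare cycle) by arguing that $F$-swap circuits cannot disturb tree-bone weights; just note that for full rigor the red/blue balance of the crossings of a bridge bone needs the bipartiteness of the even-cycle-plus-trees skeleton (so that consecutive crossings are separated by an even number of chords and hence alternate in color), not merely the fact that a bridge is crossed an even number of times.
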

\begin{proof}
We may and will assume that the skeleton graph has exactly one cycle $C$ and trees connected to the vertices of that cycle because otherwise Lemma \ref{th:tree} would apply. If at vertex class $U$ of $C$ in the connected tree the vertex $U$ has $a$ neighbors, then we consider $a$ disjoint subtrees, all rooted in $U$. For each subtree the application of Lemma \ref{th:tree} determine the corresponding weight function values, and together they determine the ``residual" $D(U)$ for the cycle $C$.  This can be done in polynomial time.

If $C$ is odd, then it provides one unique weight function as possible setup for consistent realizations. The usual application of the Tutte method provides in polynomial time a weakly consistent realization of $\bd$ which will be automatically consistent.

If $C$ is even, then the application of Lemma \ref{th:evencycle} provides the possible weight functions, together with actual consistent realizations for each possible weight function.
\end{proof}

\medskip\noindent
This finished the discussion of the existence problem of consistent realizations for skeleton graphs $\cS=(\Pi; \cB, w)$ where all connected components contain at most one cycle.  In the remainder of this section we discuss briefly the connectivity problem of the space of all consistent realizations.

First of all we have to recognize that instead of asking the connectivity of the realization space $\G(\cS)$ under the regular swap operations we have to consider the $F$-swap operations, defined by the forbidden edges outside the active bones.

Assume that our skeleton graph is connected and it has at most one cycle. If this cycle is odd, then there is at most one weight function for which $\G(\cS)$ is not empty, and every weakly consistent realization will be automatically consistent as well, so the $F$-swap operations do not destroy the consistency. The application of Theorem \ref{th:Fswaps} proves the connectivity of the space.

When the cycle under consideration is even then we have a more complex situation. First of all there may be several different weight functions with consistent realizations, and---similarly to the bipartite case $\cS(k)$---it is possible that $\G(\cS)$ is not connected  under $F$-swaps. However, again similarly to the bipartite case, one can organize the $F$-swap sequence such that whenever we have to leave the current weight function $w$ into $w'$---which differs from $w$ with one on each bone along the cycle---then the next $F$-swap goes back again to the original weight function. Thus, $\G(\cS)$ is connected under $F$-swaps and ``double $F$-swaps''.

\bibliographystyle{plain}

\end{document}